\begin{document}
\begin{frontmatter}

  \title{Determinantal Point Processes in the Flat Limit}
\runtitle{Determinantal Point Processes in the Flat Limit}

\begin{aug}
\author[A]{\fnms{Simon} \snm{Barthelmé}\ead[label=e1]{name.surname@gipsa-lab.fr}},
  \author[A]{\fnms{Nicolas} \snm{Tremblay}\ead[label=e1]{name.surname@gipsa-lab.fr}},
  \author[B]{\fnms{Konstantin} \snm{Usevich}\ead[label=e3]{name.surname@univ-lorraine.fr}}
  \and
  \author[A]{\fnms{Pierre-Olivier} \snm{Amblard}\ead[label=e1]{name.surname@gipsa-lab.fr}}

\address[A]{CNRS, Univ. Grenoble Alpes,  Grenoble INP, GIPSA-lab. \printead{e1}}

  \address[B]{Universit\'{e} de Lorraine and CNRS, CRAN (Centre de Recherche en
    Automatique en Nancy). \printead{e3}}
\end{aug}

  \begin{abstract}
    Determinantal point processes (DPPs) are repulsive point processes where the
    interaction between points depends on the determinant of a positive-semi definite matrix. 
    
    In this paper, we study the limiting process of L-ensembles based on kernel
    matrices, when the kernel function becomes flat (so that every point
    interacts with every other point, in a sense). We show that these limiting
    processes are best described in the formalism of extended L-ensembles and
    partial projection DPPs, and the exact limit depends mostly on the
    smoothness of the kernel function. In some cases, the limiting process is
    even universal, meaning that it does not depend on specifics of the kernel
    function, but only on its degree of smoothness.

    Since flat-limit DPPs are still repulsive processes, this implies that
    practically useful families of DPPs exist that do not require a spatial
    length-scale parameter. 
  \end{abstract}

\end{frontmatter}

\tableofcontents

\section*{Introduction}
\label{sec:intro}
Modeling repulsivity in point patterns is an important problem  in many
applications of random point processes. Classical examples of repulsive point
processes include the localisation of trees in  a forest, the timing of action
potentials in the nervous system, the eigenvalues of random matrices, or
fermionic particles in some potential  \cite{Macchi:CoincidenceApproach}.
Repulsive point processes may also be constructed with some application in mind:
in machine learning, it may be used to improve or accelerate learning~\cite{kulesza2012determinantal}. 

In ML applications of repulsive point processes,
a subset $\X$ of size $m $ needs to be extracted from a ground set $\Omega$ of
size $n$. $\Omega$ may represent for instance a training set, too
large for practical computation, and $\X$ a subset that is in some sense
``representative'' of $\Omega$ for the purposes of training a learning
algorithm. If $\X$ includes too many elements that are similar, it fails to be
representative of the whole of $\Omega$. A solution to this problem  is to
induce repulsivity between the elements sampled, or in other words, to sample
the elements not independently, but with negative correlation \cite{tremblay2019determinantal}.

Determinantal point processes (DPP) are by now perhaps the most famous example of negatively correlated point processes. 
The notion of  diversity in DPP is defined relative to a notion of similarity represented by a
positive-definite kernel. For instance, if the items are vectors in $\R^d$,
similarity may be defined via the squared-exponential (Gaussian) kernel:
\begin{equation}
  \label{eq:squared-exp-kernel}
  \kappa_{\varepsilon}(\vect{x},\vect{y}) = \exp \left( - (\varepsilon \norm{\vect{x}-\vect{y}})^2 \right)
\end{equation}
Here $\vect{x}$ and $\vect{y}$ are two items, and similarity is a decreasing
function of distance.


The class of DPPs can be separated into two subclasses: a large subclass called
\emph{L-ensembles} that contains the DPPs that can sample the empty set (the
probability of sampling the empty set is strictly positive); and a much smaller
class made up of those DPPs that cannot (the probability is strictly zero). Precise definitions are to be found in section~\ref{sec:definitions}. 

By definition, an L-ensemble based on the $n\times n$ kernel matrix $\bL=[\kappa_{\varepsilon}(\vect{x}_i,\vect{x}_j)]_{i,j}$ is a distribution over random subsets $\X$ such that:
\[ \Proba(\X) \propto \det [\kappa_{\varepsilon}(\vect{x}_i,\vect{x}_j)]_{\vect{x}_i,\vect{x}_j \in \X^2} \]
If two or more points in $\X$ are very similar (in the sense of the kernel function), then the matrix $\bL_\X = [\kappa_{\varepsilon}(\vect{x}_i,\vect{x}_j)]_{\vect{x}_i,\vect{x}_j \in \X^2}$
has rows that are nearly collinear and the determinant is small (see fig.~\ref{fig:illus-dpp}). This in turns
makes it unlikely that such a set $\X$ will be selected by the L-ensemble. 
For instance, for the points shown in fig.~\ref{fig:illus-dpp}, and $\varepsilon = 10$, the subset $\{ a,b,c\}$ gives a
kernel matrix 
\[ \bL_{\{ a,b,c \}} =
  \begin{pmatrix}
    1 & 0.61 & 0.8 \\
    0.61 & 1 & 0.71 \\
    0.8 & 0.71 & 1
  \end{pmatrix},
\]
with determinant equal to $0.18$. The subset $\{d,e,f\}$ gives a kernel matrix 
\[ \bL_{\{ d,e,f \}} =
  \begin{pmatrix}
    1 & 0 & 0.14 \\
    0 & 1 & 0.01 \\
    0.14 & 0.01 & 1
  \end{pmatrix},
\]
with determinant equal to $0.98$. Accordingly, the second set is $0.98/0.18
\approx 5.4 $ times more likely to be sampled.

\begin{figure}[!htbp]
  \begin{center}
    \includegraphics[width=10cm]{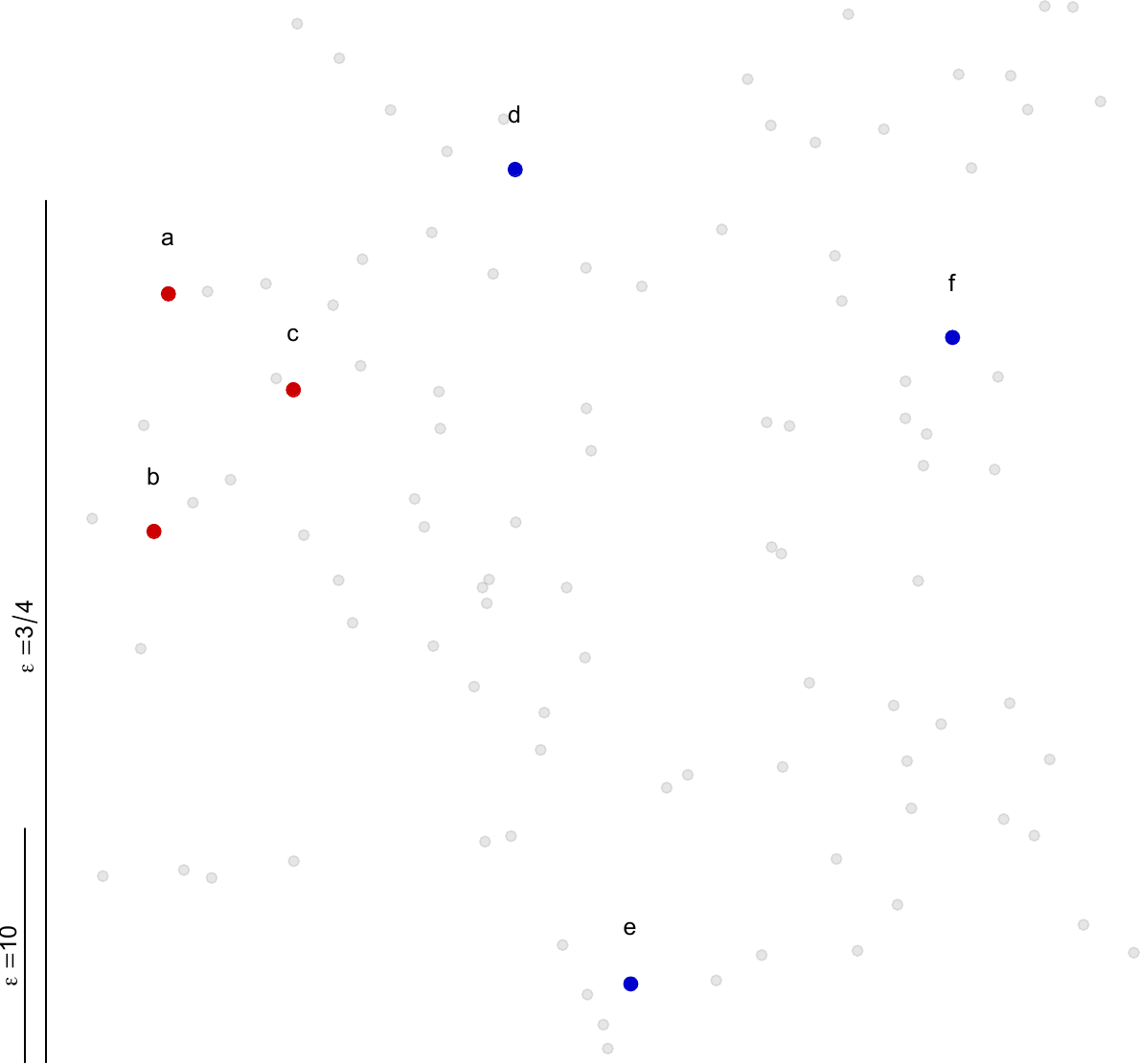}
  \end{center}
\caption{
    L-ensembles generate random subsets with probability
    proportional to the determinant of a kernel matrix. The ground set $\Omega$
    represents the items to sample from: in this figure the points in light
    gray. Two possible subsets of size 3 are represented in blue and red,
    respectively. An L-ensemble may be defined using the Gaussian kernel (eq.
    \ref{eq:squared-exp-kernel}), for instance, and $\varepsilon$ controls the
    length-scale of the kernel (the ``standard deviation'' of the Gaussian
    kernel equals $1/2\sqrt{\varepsilon}$, represented by the two vertical bars
    on the left). The
    set $X = \{a,b,c\}$ contains points that are much closer together than the set
    $X' = \{d,e,f\}$: accordingly, the kernel matrix formed from $X'$ is much better
    conditioned than the one formed from $X$, which is reflected in the determinant (see text). An L-ensemble is therefore much
    more likely to sample $X'$ than $X$. }
  \label{fig:illus-dpp} 
\end{figure}

\begin{figure}
\begin{center}

  \includegraphics[width=12cm]{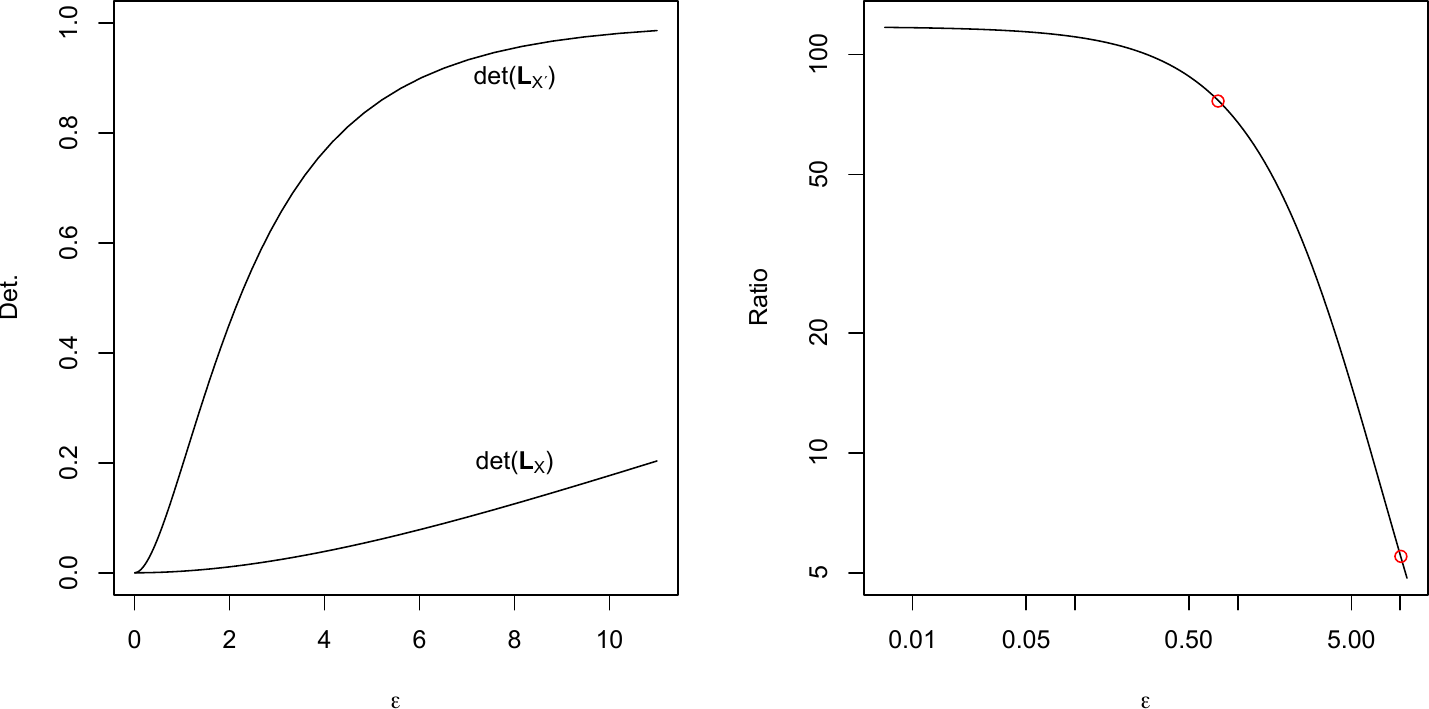}
  \end{center}

  \caption{
In this article, we study the limit of L-ensembles as $\flatlim$, meaning that
the length-scale of the kernel goes to infinity. Although all kernel
matrices are equal to the constant matrix in that limit, and all
determinants go to 0, \emph{ratios} of two determinants go to a fixed quantity.
This is what the figure shows: the left-hand part shows the determinants of
the two kernel matrices from fig. \ref{fig:illus-dpp}  corresponding to $\X$ and $\X'$, as as function of
$\varepsilon$. The right-hand part shows their ratio. The two red dots are for $\varepsilon=10$ and
$\varepsilon=3/4$.
As $\flatlim$, set $\X'$  is roughly 100 times more
    likely than set $\X$ to be sampled. }
  \label{fig:illus-dpp-flat-lim} 
\end{figure}

Importantly, how fast similarity decreases with distance
is determined by the inverse-scale parameter $\varepsilon$. Like other kernel
methods, L-ensembles are plagued with hyperparameters and finding the ``right'' value
for $\varepsilon$ is no easy task. Partial answers to this difficulty may be
obtained via the study of the so-called ``flat limit'', originally studied by
Driscoll \& Fornberg  in Radial Basis
Function interpolation \cite{driscoll2002interpolation}, which simply consists in taking $\varepsilon \rightarrow 0$ in
eq. \eqref{eq:squared-exp-kernel} (or similar kernels).

This paper addresses the question of the behaviour of L-ensembles based on similarity kernels for which $\varepsilon \rightarrow 0$. To this end, we build upon the work in~\cite{BarthelmeUsevich:KernelsFlatLimit}, where general results on the spectral properties of kernel matrices are established in the flat limit.

\subsection*{Contributions}
\label{sec:contrib}

The flat limit is best described in the formalism of extended L-ensembles and partial projection DPPs that we introduced in~\cite{paper1}. In a nutshell, extended L-ensembles provide a unified description of DPPs: whereas not
all DPPs are L-ensembles, all DPPs are extended L-ensembles. In addition, they
let us write easy-to-understand, explicit formulas for joint probabilities even in cases where the DPP at hand is not an L-ensemble. Partial projection DPPs (pp-DPPs) refer to the set of DPPs that are not L-ensembles nor projection DPPs. For instance, the size of such pp-DPPs is necessarily non-null and non-constant. 
Section~\ref{sec:def-DPPs} recalls the necessary definitions and properties of extended L-ensembles and pp-DPPs. 


With these definitions in hand, the next sections describe our results on the
flat limit of DPPs; to be precise, these concern the study of the limiting process of an L-ensemble based on a kernel matrix, as $\varepsilon$ tends to zero.  We show the following results:
\begin{itemize}
\item Surprisingly, in the flat limit, such L-ensembles stay well-defined (see fig. \ref{fig:illus-dpp-flat-lim} for an intuitive explanation of why that occurs)
\item The limiting process depends mostly on the smoothness of the kernel function 
\item In particular cases (depending on the dimension $d$), they exhibit universal
  limits, i.e. all kernels within the same smoothness class lead to the same
  limiting L-ensemble 
\end{itemize}

As an example of our results, we can prove the following (the notation is made precise later):
let $\Omega \subset \R$ (a finite set of points on the real line), and $\X$ an L-ensemble on $\Omega$. Let $\kappa_{\varepsilon}$ be a kernel
function that is $C^\infty$ in both $x$ and $y$ at $\vect{0}$ and analytic in $\varepsilon$  (e.g., the Gaussian). Pick
an odd integer $p<2|\Omega|-1$. Then, applying Thm~\ref{thm:Xe_varying_multivariate}, as $\flatlim$ the L-ensemble based on the matrix 
$[\varepsilon^{-p}
\kappa_{\varepsilon}({x_i},{x_j})]_{{x}_i,{x}_j \in \Omega^2}$
has the law:
\begin{align}
\label{eq:first_example}
     p\left(\X= \{x_1, \ldots, x_m\} \right) =
  \begin{cases}
    \frac{1}{Z} \prod_{i<j} (x_i -
    x_j)^2 & \mbox{\ if\ } m = \frac{p+1}{2}, \\
    0& \mbox{\ otherwise}.
  \end{cases}
\end{align}
On the other hand, if the kernel function is only once differentiable in
$\vect{x}$ and $\vect{y}$ at 0, e.g.
with $\kappa_\varepsilon(x,y) = \exp(-\varepsilon|x-y|)$, then taking the limit
of the L-ensemble based on the matrix $[\varepsilon^{-1}
\kappa_{\varepsilon}({x}_i,{x}_j)]_{({x}_i,{x}_j) \in
  \Omega^2}$ we obtain a different process, with joint probability:
\[     p\left(\X= \{ x_1, \ldots, x_m \} \right) =
  \begin{cases}
    \frac{1}{Z} \gamma^m \prod_{i=1}^{m-1} (x_{i+1} -
    x_i), & \mbox{\ if\ } m \geq 1, \\
    0, & \mbox{\ otherwise},
  \end{cases}
\]
where we have ordered the points so that $x_1 \leq x_2 \leq \ldots \leq x_m$.
Whereas the previous limit was completely universal, in the sense that the
limiting distribution is the same for all $C^\infty$ kernels, this other limit
is almost universal, but not quite: the limit is the same for all $C^1$ kernels,
except for the value of  $\gamma\in\mathbb{R}$ which depends on the kernel. 

Our results are much more general, and the general case involves some
subtleties. The main (and most general) results on the flat
limit are Theorems~\ref{thm:general-case-smooth-fixed-size} and~\ref{thm:Xe_varying_multivariate}, but the statements require that we
set up a bit of notation. 

Because the results require a bit of background to explain properly, we show in
fig. \ref{fig:teaser} a teaser meant to motivate the reader to pursue reading at
least until section \ref{sec:universal-limits}, where the key to the mystery is
revealed. The teaser shows counter-intuitive behaviour of L-ensembles in the flat limit
(in dimension 2). 

\begin{figure}
   \begin{center}

     \includegraphics[width=8cm]{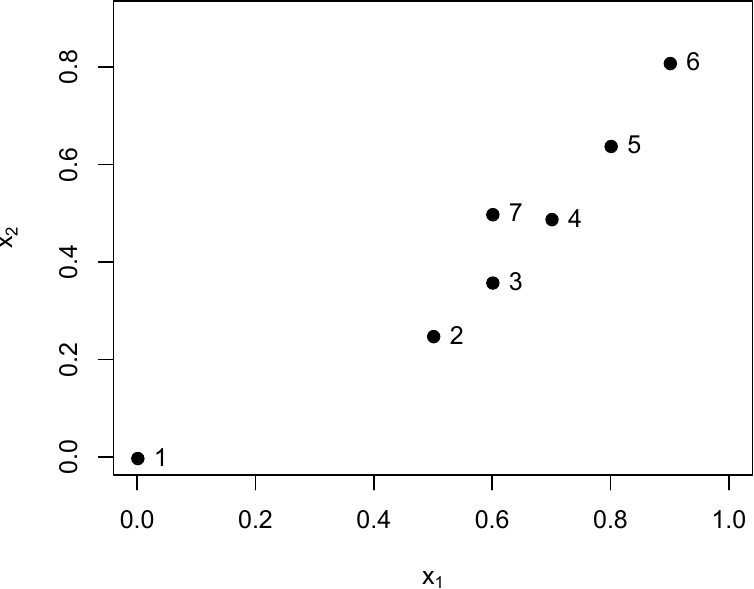}
  \caption{Suppose a (fixed-size) L-ensemble is used to sample 6 of the 7 labelled
    points shown on the figure. With a Gaussian kernel, as $\flatlim$, the set
    $X = \{1,2,3,4,5,6\}$ has a probability 0 of being sampled, while the set
    $X' = \{2,3,4,5,6,7\}$, which is less spread-out, has a small but non-zero
    probability of being sampled. With an exponential kernel, on the other hand,
    both sets have a non-zero probability of being sampled, but in this case $X$
    is much \emph{more} likely to be sampled that $X'$. The explanation for that
    counter-intuitive behaviour is to be found in section
    \ref{sec:universal-limits}.} \label{fig:teaser}
   \end{center}
\end{figure}

The limitations of our results are as follows. We focus on stationary kernels,
of the form $k_\varepsilon(\vect{x},\vect{y}) = f(\varepsilon \norm{x-y})$. The
results can be extended to nonstationary kernels of the form $k(\varepsilon
\vect{x}, \varepsilon \vect{y})$, following the results in
\cite{BarthelmeUsevich:KernelsFlatLimit}, but may be less easy to interpret.
Another limitation is that we only look at finite DPPs, leaving aside the continuous case. All results
should extend to continuous DPPs on a compact subset of $\R^d$, with the
appropriate change in notation. The case of continuous DPPs on a \emph{non-compact} subspace of $\R^d$ appears to us harder to deal with.

\subsection*{Practical implications}

The practical-minded reader might object to the abstract nature of this work.
However, we stress that flat limits are an elegant way of partially answering
the questions of hyper-parameter tuning, and, to a lesser extent, the choice of
similarity function.

One outcome of this work is that as $\flatlim$, DPPs have limits that
are sensible, repulsive and so should behave reasonably in applications. In
particular, the ``default'' distance-based DPPs suggested in \cite{paper1} and
the multivariate orthogonal ensembles used in \cite{bardenet2020monte} are two
such limits. 
One
advantage of directly sampling from the limiting DPP is that there is no spatial
scaling parameter to select. The only one that remains is how many points
one wishes to sample. This assumes of course that one has chosen a particular
kernel function, which leads us to our second point. 

The second conclusion of our work is that what the exact kernel is, matters much
less than what its smoothness order is. If one were to speculate based on the
results in the unidimensional case, kernels with low regularity lead to mostly
local repulsion whereas kernels with high regularity lead to a more global form
of repulsion; and this is borne out as well by some numerical evidence. Kernels
with high regularity lead to some surprising long-distance repulsiveness
properties, as fig. \ref{fig:teaser} illustrates.

\subsection*{Structure of the paper}
We begin with some definitions and background in section \ref{sec:definitions}.
Section \ref{sec:def-DPPs} introduces extended L-ensembles and partial-projection DPPs 
and gives some major properties.

For clarity, flat limit results are given in increasing order of complexity. We
begin with results on the limits of \emph{fixed-size} L-ensembles (the ``k-DPPs'' of
\cite{KuleszaTaskar:FixedSizeDPPs}), because these results are much easier to state and
serve as a building block for the case of \emph{variable-size} L-ensembles. Thus,
section \ref{sec:univariate-results} and section \ref{sec:results-multivariate} study fixed-size L-ensembles in the flat limit. For pedagogical reasons, we begin with univariate results (where the points are a
subset of the real line), before giving the results for the multivariate case,
which require some background on multivariate polynomials.

Section \ref{sec:two_general_theorems} gives the results in complete generality,
meaning that they cover the multivariate case in both fixed-size and
varying-size DPPs. Note that all results given in prior sections are
corrolaries of the two main theorems of section \ref{sec:two_general_theorems}. 

Section \ref{sec:practical-consequences} details some practical consequences of
our results, in terms of eliminating hyperparameters. 

\section{Definitions and background}
\label{sec:definitions}
In this section, we start by briefly recalling some background material on kernels, before giving a few determinantal lemmas that will prove useful in the following. We end this section by making explicit what we mean by the convergence of DPPs from asymptotic series. 


\subsection{Kernels, smoothness orders}
\label{sec:intro-kernels}

We only outline the basic concepts needed to express the results from
\cite{BarthelmeUsevich:KernelsFlatLimit}, which our analysis is based on. For more on kernels, the reader is
invited to consult \cite{stein1999interpolation} or \cite{wendland2004scattered}.
A kernel is a positive definite function $\kappa : \R^d \times \R^d \rightarrow \R$. We call the kernel
stationary if $\kappa(\vect{x},\vect{y}) = f(\norm{\vect{x}-\vect{y}}_2)$ for
some function $f$, i.e. it only depends on the (Euclidean) distance between
$\vect{x}$ and $\vect{y}$.
We assume further that $f$ is analytic\footnote{We choose this assumption for
  simplicity, but it can be relaxed to an assumption of differentiability up to a required order. } at 0, and expand it as:
\begin{equation}
  \label{eq:kernel-expansion}
  f(\norm{\vect{x}-\vect{y}}_2) =  f_0 +  f_1 \norm{\vect{x}-\vect{y}}_2 + f_2 \norm{\vect{x}-\vect{y}}_2^2  + f_3 \norm{\vect{x}-\vect{y}}_2^3 +  \ldots
\end{equation}
where $f_i = \frac{f^{(i)}(0)}{i!}$ is the rescaled derivatives at 0 of $f$.
The smoothness order of the kernel is defined with respect to the \emph{odd}
derivatives of $f$ at 0. This relates to the differentiability of
$f(\norm{\vect{x}-\vect{y}}_2)$ in both $\vect{x}$ and $\vect{y}$. For instance,
the function $\norm{\vect{x}-\vect{y}}$ is not differentiable at $\vect{x} = 0$ (merely continuous), while $\norm{\vect{x}-\vect{y}}^2$ is
infinitely differentiable at this point. In general, for an integer $p$, 
$\norm{\vect{x}-\vect{y}}^{2p+1}$ is $p+1$ times differentiable while
$\norm{\vect{x}-\vect{y}}^{2p}$ is infinitely differentiable in both variables.
Therefore, in eq. \eqref{eq:kernel-expansion}, the first non-zero odd derivative
makes a finitely-differentiable term appear, which motivates the following
definition: 
\begin{definition}
  The smoothness order $r$ of a stationary kernel $\kappa(\vect{x},\vect{y}) =
  f(\norm{\vect{x}-\vect{y}}_2)$ is defined as:
  \begin{equation}
    \label{eq:smoothness-order}
    r = \min \{r | f_{2r-1} \neq 0\}
  \end{equation}
  i.e, the smallest $r$ such that the $r$-th \emph{odd} derivative is non-zero. 
\end{definition}

A kernel like the squared-exponential (eq. \eqref{eq:squared-exp-kernel})
depends on the squared distance and so has $r = \infty$. We call such kernels
\emph{completely smooth}. Kernels with finite values of $r$ are called
finitely smooth (f.s.). An example of a kernel with $r=1$ is the exponential
kernel:
\begin{equation}
  \label{eq:exp-kernel}
  \kappa_{\varepsilon}(\vect{x},\vect{y}) = \exp \left( -\varepsilon \norm{\vect{x}-\vect{y}}_2 \right)
\end{equation}
An example of a kernel with $r=2$ is:
\begin{equation}
  \label{eq:r2-kernel}
  \kappa_{\varepsilon}(\vect{x},\vect{y}) = \left(1+\varepsilon \norm{\vect{x}-\vect{y}}_2\right)\exp \left( -\varepsilon \norm{\vect{x}-\vect{y}}_2 \right)
\end{equation}
The Mat{\`e}rn kernels \cite{stein1999interpolation}, popular in spatial statistics, are a generic family of
kernels which have $r$ as a parameter. Other examples of finitely-smooth kernels
can be found in our numerical results, for instance in fig. \ref{fig:convergence-cond-1d}. 

\subsection{Notation, and two determinant lemmas}
\label{sec:determinant-lemmas}
 Let $\matr{A}$ be a $n \times n$ matrix, and
$Y$, $Z$ be two subsets of indices. Then $\matr{A}_{Y,Z}$ is the submatrix
of $\matr{A}$ formed by retaining the rows in $Y$ and the columns in $Z$.  Furthermore, $\matr{A}_{:,Y}$ (resp. $\matr{A}_{Y,:}$) is the matrix made of the full columns (resp. rows) indexed by $Y$. Finally, we
let $\matr{A}_Y = \matr{A}_{Y,Y}$.
Also, for a matrix $\matr{V}$, by $\mspan(\matr{V})$ we denote its column span, and by $\orth(\matr{V})$ the orthogonal complement of $\mspan(V)$. 

We shall need the two following determinantal lemmas in the paper.

\begin{lemma}
  \label{lem:block-det}
  Let $\matr{M} =
  \begin{pmatrix}
    \matr{A} & \matr{U} \\
    \matr{U}^\top & \matr{W}
  \end{pmatrix}
  $,  with $\matr{A}$ invertible.  Then
  \begin{equation}
    \label{eq:block-det}
    \det(\matr{M}) = \det(\matr{A})\det(\matr{W}-\matr{U}^\top\matr{A}^{-1}\matr{U}).
  \end{equation}
\end{lemma}

The next lemma concerns so-called ``saddle-point matrices'', and is proved in
\cite[Appendix A]{BarthelmeUsevich:KernelsFlatLimit}.  

\begin{lemma}[{\cite[Lemma 3.10]{BarthelmeUsevich:KernelsFlatLimit}}]
  \label{lem:det-saddlepoint}
  Let $\bL \in \R^{n \times n}, \bV \in \R^{n \times p}$, 
with $\bV$ of full column rank and $p \leq n$. 
  Let 
  $\bQ \in \R^{n \times (n-p)}$ be an  orthonormal basis for $\orth({\bV})$ (i.e., $\bQ^{\top} \matr{V} = \matr{0}$, $\rank(\bQ) = n-p$).  
  Then:
  \begin{equation}
    \label{eq:det-saddlepoint}
  \det       \begin{pmatrix}
    \bL & \bV \\
    \bV^\top & \matr{0} 
  \end{pmatrix} = (-1)^p \det(\bV^\top\bV)\det(\bQ^\top\bL\bQ).
\end{equation}
\end{lemma}

\subsection{Convergence of DPPs  from asymptotic series}
\label{sec:total-variation-convergence}
We focus here on convergence in law: we say that a random variable $\X_\varepsilon$ converges to a random variable
$\X_\star$ in $\flatlim$ if for all outcomes $A$
\[
\Proba(\X_{\varepsilon}=A) \to \Proba(\X_{\star}=A). 
\]
In the discrete outcome spaces used here, it is equivalent to convergence in total variation ($\lim_{\flatlim} D_{TV}(\X_\varepsilon,\X_\star)=0$), where for discrete random variables $\X$ and $\Y$ defined on the same
space of outcomes, the total variation distance equals:
\begin{equation}
  \label{eq:total-variation}
  D_{TV}(\X,\Y) = \sum_{A} | \Proba(\X=A) - \Proba(\Y=A) |.
\end{equation}
The results from \cite{BarthelmeUsevich:KernelsFlatLimit} provide us with
asymptotic expansions of the determinants involved in the probability mass
functions. To connect asymptotic expansions with  convergence of random variables we shall use the following simple lemma.

\begin{lemma}
  \label{lem:TV-convergence}
  Let $\X_{\varepsilon}$ be a family of discrete random variables (e.g.,
  a discrete point process) with values in
  the finite set $\Phi$. Let \[ \Proba(\X_\varepsilon=X)=\frac{f_\varepsilon(X)}{\sum_{Y \in
        \Phi}f_\varepsilon(Y)}, \]
  where the following asymptotic expansion holds for $f_{\varepsilon}$ and an
  integer $p$, possibly negative:
  \[ f_\varepsilon(X) = \varepsilon^p(f_0(X) + \O(\varepsilon)).\]
Then  $\X_\varepsilon$ converges to the random variable    $\X_\star$ (with values in $\Phi$), defined as
\[
\Proba(\X_\star=X)=\frac{f_0(X)}{\sum_{Y \in \Phi} f_0(Y)}.
\]

\end{lemma}
\begin{proof}
By direct inspection, we have
\[
 \Proba(\X_\varepsilon=X)=\frac{f_\varepsilon(X)}{\sum_{Y \in  \Phi}f_\varepsilon(Y)} = 
  \frac{f_0(X) + \O(\varepsilon)}{ \sum_{Y\in \Phi} ( f_0(Y) + \O(\varepsilon))} \to \frac{f_0(X)}{\sum_{Y \in \Phi} f_0(Y)},
\]
where convergence holds everywhere since  $\Phi$ is a finite set.
\end{proof}

We will also encounter discrete distributions in which the
(unnormalised) probability mass function $f_{\varepsilon}$ may involve different powers of $\varepsilon$. For
instance, consider the random variable $Y_\varepsilon \in \{1,2,3 \}$ with
unnormalised mass function $f_\varepsilon(Y_\varepsilon= 1) = \alpha_1\varepsilon$,
$f_\varepsilon(Y_\varepsilon= 2) = \alpha_2$, and $f_\varepsilon(Y_\varepsilon = 3) =
\alpha_3\varepsilon^{-1}$. What is the law of $Y_\varepsilon$ as $\flatlim$?
After normalisation, we have:
\begin{align*}
  \Proba(Y_\varepsilon = 1) &= \frac{\alpha_1 \varepsilon}{\alpha_1 \varepsilon + \alpha_2  + \alpha_3 \varepsilon^{-1}} = \frac{\alpha_1 \varepsilon^{2}}{\alpha_3 + \O(\varepsilon)} = \O(\varepsilon^2) \\
  \Proba(Y_\varepsilon = 2) &= \frac{\alpha_2 }{\alpha_1 \varepsilon + \alpha_2  + \alpha_3 \varepsilon^{-1}} = \frac{\alpha_2 \varepsilon}{\alpha_3 + \O(\varepsilon)} = \O(\varepsilon) \\
  \Proba(Y_\varepsilon = 3) &= \frac{\alpha_3 \varepsilon }{\alpha_1 \varepsilon + \alpha_2  + \alpha_3 \varepsilon^{-1}} = \frac{\alpha_3 }{\alpha_3 + \O(\varepsilon)} = 1 + \O(\varepsilon) 
\end{align*}
The diverging order wins, and $Y_\varepsilon$ equals 3 almost surely as
$\flatlim$.

This line of reasoning can be easily generalised to obtain the following lemma,
which simply says that the smallest order in $\varepsilon$ always wins:
\begin{lemma}
  \label{lem:TV-convergence-diverging}
  Let $\X_{\varepsilon}$ be a family of discrete random variables with values in
  the finite set $\Phi$. Let $\Proba(\X_\varepsilon=X)=\frac{f_\varepsilon(X)}{\sum_{Y \in
        \Phi}f_\varepsilon(Y)}$,
  where the following series holds for $f$:
  \[ f_\varepsilon(X) = \varepsilon^{\eta_X}(f_0(X) + \O(\varepsilon)).\]
  for some $\eta_X \in \mathbb{Z}$ which may be negative. Let $\eta_{min} = \min_{X \in \Phi}
  \eta_X$ and $\Phi_{min} = \left\{ X | \eta_X = \eta_{min} \right\}$.
  Then $\X_{\varepsilon} \in \Phi_{min}$ almost surely as $\flatlim$. Moreover, $\X_\varepsilon \to \X_\star$, where $\X_\star$ is the random variable with support in $\Phi_{min}$, with $\Proba(\X_\star = X)=\frac{f_0(X)}{\sum_{Y \in \Phi_{min}} f_0(Y)}$. 
\end{lemma}

 \section{DPPs and extended L-ensembles}
 \label{sec:def-DPPs}
 This section recalls basic facts on DPPs and presents a few properties of a novel representation of DPPs, detailed in~\cite{paper1}, called extended L-ensembles. 

 Let $\Omega=\{ \vect{x}_1, \ldots, \vect{x}_n\} \subset \R^d$ be a collection of
 vectors called the \emph{ground set}. A finite point process $\X$ is a random
 subset $\X \subseteq \Omega$. Abusing notation, we sometimes use $\X$ to
 designate the indices of the items, rather than the items themselves. Which one we mean should be clear from context. 
 
 \begin{definition}[Determinantal Point Process]\label{def:dpp}
 	Let $\bK \in \R^{n \times n}$ be
 	a positive semi-definite matrix verifying $\bm{0}\preceq \bK \preceq \bm{I}$. $\X$ is a DPP with
 	marginal kernel $\bK$ if
 	\begin{equation}
 	\label{eq:def-dpp}
 	\forall A\subseteq\Omega\qquad\Proba(A\subseteq\mathcal{X}) = \det \bK_A,
 	\end{equation}
 	where by convention, $\det \bK_{\varnothing} = 1$. 
 \end{definition}

A related point process is given by the class of L-ensembles:
\begin{definition}[L-ensemble]\label{def:dpp_via_L}
	Let $\bL \in \R^{n \times n}$ designate
	a symmetric positive semi-definite matrix. An L-ensemble based on $\bL$ is a point process $\X$ defined as
	\begin{equation}
	\label{eq:def-dpp_via_L}
	\Proba(\X=X) = \frac{\det \bL_X}{\det(\bI+\bL)},
	\end{equation}
	where by convention, $\det \bL_{\varnothing} = 1$.
\end{definition}
A well-known fact~\cite{kulesza2012determinantal} states that any L-ensemble is a DPP (more precisely: an L-ensemble based on a matrix $\bL$ is a DPP with marginal kernel $\bK=\bL(\bI+\bL)^{-1}$ but the contrary is false, such that the class of L-ensembles is a strict subclass of DPPs. For those DPPs that are also L-ensembles, one can conveniently choose, depending on the application, between a marginal-based approach (via the marginal kernel $\bK$ and Eq.~\eqref{eq:def-dpp}) and a density-based approach (via $\bL$ and Eq.~\eqref{eq:def-dpp_via_L}). Unfortunately, for those DPPs that are not L-ensembles, one only has access to the marginal-based approach and even if formulas for the density exist\footnote{A formula due to~\cite{Macchi:CoincidenceApproach} exists in this case but it is unwieldy. See also the discussion around Corollary 1.D.3 in~\cite{Gau20}.}, they are not easy to use. 

In order to remedy this discrepancy, we developed in~\cite{paper1} a novel formalism, called extended L-ensembles, that we here briefly recall and discuss. 
 The extended L-ensemble representation is based on the notion of non-negative pairs (NNP):
 \begin{definition}
 	\label{def:nnp}
 	A Nonnegative Pair, denoted by $\ELE{\bL}{\bV}$, is a pair $\bL \in \R^{n \times n}$, $\bV \in \R^{n
 		\times p}$ of full column rank with $0\leq p\leq n$, such that $\bL$ is symmetric and conditionally positive semi-definite (CPD) with respect to
 	$\bV$ (that is: $\vect{x}^\top\bL\vect{x} \geq 0$ for all $\vect{x}$ verifying $\bV^\top \vect{x}=0$). Wherever a NNP $\ELE{\bL}{\bV}$ appears below, we consistently use the following notation: 
 	\begin{itemize}
 		\item $\bQ \in \R^{n \times p}$ is an
 		orthonormal basis of  $\mspan \bV$, such that $\bI - \bQ\bQ^\top$
 		is a projector on $\orth \bV$
 		\item $\widetilde{\bL} = (\bI - \bQ\bQ^\top)\bL(\bI -
 		\bQ\bQ^\top)\in \R^{n \times n}$. $\tbL$ is symmetric and real, thus diagonalisable in $\mathbb{R}$. Moreover, as $\bL$ is CPD with respect to $\bV$, all eigenvalues  of $\widetilde{\bL}$ are non-negative. We will denote by $q$ the rank of $\widetilde{\bL}$. Note that $q \le n-p$ as the $p$ columns of $\bQ$ are trivially eigenvectors of $\widetilde{\bL}$ associated to $0$. We write
 		\[
 		\widetilde{\bL} = \tbU\tbLam\tbU^\top
 		\] 
 		its truncated spectral decomposition; where $\tbLam=\text{diag}(\widetilde{\lambda}_1,\ldots,\widetilde{\lambda}_q)\in \mathbb{R}^{q\times q}$ and $\tbU \in \mathbb{R}^{n\times q}$ are the diagonal matrix of nonzero eigenvalues and the matrix of the corresponding eigenvectors of $\widetilde{\bL}$, respectively.
 	\end{itemize}
 \end{definition}
 \begin{remark}
 	Note that we authorize $p=0$ in the definition: in this case, $\bQ=0$ and $\tbL=\bL$.
 \end{remark}
 
 \begin{definition}[Extended L-ensemble] 
 	\label{def:ext_Lens}
 	Let $\ELE{\bL}{\bV}$ be any NNP. An extended L-ensemble $\X$ based on $\ELE{\bL}{\bV}$ is a point process verifying:
 	\begin{align}
 	\label{eq:proba_mass}
 	\forall X\subseteq\Omega,\qquad \Proba(\X=X) = \frac{1}{Z} \det
 	\begin{pmatrix}
 	\bL_{X} & \bV_{X,:} \\
 	(\bV_{X,:})^\top & \matr{0}
 	\end{pmatrix},
 	\end{align} 
 	with $Z$ the normalization constant verifying:
 	$$Z=\sum_{X} \det
 	\begin{pmatrix}
 	\bL_{X} & \bV_{X,:} \\
 	(\bV_{X,:})^\top & \matr{0} 
 	\end{pmatrix} = (-1)^p\det(\bI+\tbL)\det(\bV^\top\bV).$$
 \end{definition}

 \begin{remark}
	If $p=0$, then the extended L-ensemble $\X$ based on $\ELE{\bL}{\bV}$ reduces to a simple L-ensemble associated to the semi-positive definite matrix $\bL$. In particular, the probability of sampling the empty set, $1/\det(\bI+\bL)$, is always positive. \\
	If $p>0$, then the size of the extended L-ensemble $\X$ based on $\ELE{\bL}{\bV}$, denoted by $|\X|$, is necessarily superior or equal to $p$ (in particular, $\X$ can never be the empty set); and we call such DPPs \emph{partial projection DPPs} (pp-DPPs) for reasons that become clear when one studies their mixture representation (the projective part comes from $\bV$)~\cite[Section 3.2]{paper1}
\end{remark}
 Importantly, the class of extended L-ensembles is identical to the class of DPPs:
 
 \begin{theorem}{Thm. 2.9 in\cite{paper1}}
 	\label{thm:K_to_L-ens}
 	i/~Let $\ELE{\bL}{\bV}$ be any NNP, and $\X$ be an extended L-ensemble based on $\ELE{\bL}{\bV}$. 
 	Then, $\X$ is a DPP with marginal kernel
 	\begin{equation}
 	\label{eq:dpDPPmarginalkernel}
 	\bK = \bQ \bQ^\top + \tbL(\bI+ \tbL)^{-1}.
 	\end{equation}
 	ii/~Let $\bm{0}\preceq\bK\preceq \bI$ be any marginal kernel and $\X$ its associated DPP. Denote by $\bV\in\mathbb{R}^{n\times p}$ the matrix concatenating the $p\geq 0$ orthonormal eigenvectors of $\bK$ associated with eigenvalue $1$ and $\bL=\bK\left(\bI-\bK\right)^{\dagger}$ with $\dagger$ representing the Moore-Penrose pseudo-inverse. Then, $\X$ is an extended L-ensemble based on the NNP $\ELE{\bL}{\bV}$.
 \end{theorem}
\begin{remark}[Notation] As a consequence, we denote the extended L-ensemble based on the NNP $\ELE{\bL}{\bV}$ by  $\X\sim \ppDPP \ELE{\bL}{\bV}$. In the specific case where $p=0$ for which the underlying process reduces to a simple L-ensemble, we will simply write $\X\sim DPP(\bL)$.
\end{remark}
 
Thus, any DPP $\mathcal{X}$ may equivalently be defined via 
 	\begin{itemize}
 		\item its marginal probabilities. This requires the definition of a marginal kernel $\bK$ verifying $\bm{0}\preceq \bK \preceq \bm{I}$ and all marginals at any order are given by Eq.~\eqref{eq:def-dpp}.
 		\item its joint probability distribution. This requires the definition of a NNP $\ELE{\bL}{\bV}$ as in Definition~\ref{def:nnp}, and the probability distribution is given by Eq.~\eqref{eq:proba_mass}.
 \end{itemize}
 
 \subsection{The fixed-size case}
 In general, the size of a DPP sample is itself random. In many practical scenarios however, one wishes to control the size of the random sample, which lead authors in~\cite{KuleszaTaskar:FixedSizeDPPs} to introduce fixed-size DPPs:
 \begin{definition}[Fixed-size Determinantal Point Process]
 	\label{def:fsDPP}
 	A fixed size DPP of size $m$ is a DPP $\X$ conditioned on $|\X|=m$. 
 \end{definition}

\begin{definition}[Fixed-size L-ensemble]
	\label{lemma:mDPP_via_L-ens}
	Let $\bm{0}\preceq\bL$ be a positive semi-definite matrix. A fixed-size L-ensemble is a point process $\X$ defined as:
	\begin{align}
	\label{eq:proba_mass_Lens}
	\forall X\subseteq\Omega,\qquad \Proba(\X=X) = \frac{\det
		\bL_{X}}{e_{m}(\bL)} \;\Ind(|X|=m).
	\end{align}
	where $e_m(\bL)$ is the $m$-th elementary symmetric polynomial and $\Ind(\cdot)$ is the indicator function.
\end{definition}

Again, fixed-size L-ensembles are fixed-size DPPs, but the contrary is false. The extended L-ensemble representation enables to fill in that gap.  
Indeed, as a consequence of the equivalence between extended L-ensembles and DPPs, one obtains:
 \begin{corollary}
 	Let $\bm{0}\preceq\bK\preceq \bI$ be any marginal kernel and $\X$ its associated fixed-size DPP of size $m$. Then, $\X$ is a fixed-size extended L-ensemble associated to the NNP $\ELE{\bL}{\bV}$ as defined in theorem~\ref{thm:K_to_L-ens}. Moreover:
 	\begin{align}
 	\label{eq:proba_mass_fs}
 	\forall X\subseteq\Omega,\qquad \Proba(\X=X) = \frac{1}{Z_m} \det
 	\begin{pmatrix}
 	\bL_{X} & \bV_{X,:} \\
 	(\bV_{X,:})^\top & \matr{0}
 	\end{pmatrix}\Ind(|X|=m)
 	\end{align}
 	where $Z_m $ is the normalization constant verifying:
 	$$Z_m = \sum_{|X|=m} \det
 	\begin{pmatrix}
 	\bL_{X} & \bV_{X,:} \\
 	(\bV_{X,:})^\top & \matr{0} 
 	\end{pmatrix} = (-1)^p \;e_{m-p}(\tbL)\det(\bV^\top\bV)$$
 	and $e_k(\tbL)$ is the $k$-th elementary symmetric polynomial.
 \end{corollary}

 \begin{remark}
	If $p=0$, then the fixed-size extended L-ensemble $\X$ based on $\ELE{\bL}{\bV}$ reduces to a simple fixed-size L-ensemble associated to $\bL$.
	If $p>0$, the associated fixed-size DPP is called a fixed-size partial projection DPPs (fixed-size pp-DPPs); and it can only be defined for $m\geq p$. 
\end{remark}

\begin{remark}[Notation] We denote the fixed-size extended L-ensemble based on the NNP $\ELE{\bL}{\bV}$ by  $\X\sim \mppDPP{m} \ELE{\bL}{\bV}$. In the specific case where $p=0$ for which the process reduces to a fixed-size L-ensemble, we will simply write $\X\sim \mDPP{m} (\bL)$.
\end{remark}

Note that a fixed-size DPP is not a DPP in general, with the notable exception of projection DPPs:
\begin{definition}[Projection DPP]
	Let $\bU$ be an $n \times m$ matrix with $\bU^\top \bU = \bI_m$. A projection DPP $\X$ is a DPP with marginal kernel $\bK=\bU\bU^\top$. Equivalently (see e.g., {\cite[Lemma 1.3]{Barthelme:AsEqFixedSizeDPP}}), it can be described as a fixed-size L-ensemble $\X \sim \mDPP{m}(\bU\bU^\top)$.
\end{definition}

In the remainder of the paper, we will need the following lemma, which gives
another characterisation of projection DPPs: 
\begin{lemma}[See e.g., {\cite[Lemma 1.25]{paper1}}]
	\label{lem:max-rank-dpp}
	Let $\X \sim \mDPP{m}(\bL)$ with $\rank(\bL) = m$, and let $\bU\in \R^{n \times m}$ denote an
	orthonormal basis for $\mspan \bL$.  Then, equivalently, $\X \sim
	\mDPP{m}(\bU \bU^\top)$.
\end{lemma}

\section{The flat limit of fixed-size L-ensembles (univariate case)}
\label{sec:univariate-results}

In this section and the two following ones, we study L-ensembles based on kernel matrices taken in the flat limit. 
Our two main general theorems, one for the fixed-size case and one for the much
more involved varying-size case, are in Section~\ref{sec:two_general_theorems}.
We are well aware that they are technical and, in order to guide the reader into
those results, we propose, in Section~\ref{sec:univariate-results}, to start
gently with corollaries stating the limits of \emph{fixed-size L-ensembles} in
the \emph{univariate} case (the ground set $\Omega$ is a subset of the real line). Then, Section~\ref{sec:results-multivariate} presents corollaries stating the limits obtained  in the \emph{multivariate} case ($\Omega\subseteq\mathbb{R}^d$, $d\geq1$), but still in the fixed-size context. Both sections do not contain proofs (they are all in Section~\ref{sec:two_general_theorems}) and are devoted to provide intuitions on our main results, via examples and numerical illustrations. 

This section is organized as follows: we begin by defining our objects of study as well as the necessary notation. We then give corollaries stating the limits of \emph{fixed-size L-ensembles} in the \emph{univariate}  case which, as we will see, depend mostly on $r$, the smoothness parameter of the kernel. The section concludes with some numerical results.

\subsection{Introduction}
\label{sec:1d-intro}

We focus on stationary kernels, as defined in section
\ref{sec:intro-kernels}, where $\varepsilon$ plays the role of an inverse scale
parameter. Thus, we consider L-ensembles based on matrices of the form
\[ \bL(\varepsilon) = [\kappa_\varepsilon(x_i,x_j)]_{i=1,j=1}^n\]
for a set of points $\Omega = \{x_1, \ldots, x_n\}$, all on the real line and all different from one another. 
From stationarity, the kernel function $\kappa_\varepsilon$ may be
written as:
\[ \kappa_\varepsilon(x_i,x_j) = f(\varepsilon|x_i-x_j|)\]
and we further assume that $f$ is analytic in a neighbourhood of 0. As in
equation \eqref{eq:kernel-expansion}, we expand the kernel in powers of $\varepsilon$ as:
\[ \kappa_\varepsilon(x_i,x_j) = f_0 +  \varepsilon f_1 |x_i-x_j| +
  \varepsilon^2 f_2 |x_i-x_j|^2  + \varepsilon^3 f_3|x_i-x_j|^3 +  \ldots\]
The expansion for individual entries may be represented in a more compact and
familiar manner in a matrix form:
\begin{equation}
  \label{eq:kernel-matrix-expansion}
  \bL(\varepsilon) = f_0 \bD^{(0)} + \varepsilon f_1 \bD^{(1)} + \varepsilon^2 f_2 \bD^{(2)} + \ldots
\end{equation}
where 
\[ \bD^{(p)} = [|x_i-x_j|^p]_{i,j}\]
Our goal is to characterise the limiting processes that arise from fixed-size (and later varying-size in Section~\ref{sec:two_general_theorems}) L-ensembles based on $\bL(\varepsilon)$ as $\flatlim$. It is 
useful to think of the terms $\varepsilon^p f_p \bD^{(p)}$ as containing
features that are increasingly down-weighted as $\flatlim$. The analysis is notably
complicated by the fact that the matrices
$\bD^{(p)}$ are rank-deficient for even $p$ (up to some index depending on $n$) 
but invertible for odd $p$ \cite{BarthelmeUsevich:KernelsFlatLimit}. 
The smoothness order of the kernel (see section
\ref{sec:intro-kernels}) defines how soon in the decomposition the first
invertible matrix appears. For instance, if $r=2$ then $f_1 = 0$ and we get:
\[   \bL(\varepsilon) = f_0 \bD^{(0)} +
  \varepsilon^2 f_2 \bD^{(2)} + \varepsilon^3 f_3 \bD^{(3)} + \ldots  \]
If $n>2$, the first invertible matrix to appear in the expansion in
$\varepsilon$ is $\bD^{(3)}$, and it will lead to different asymptotic
behaviour than if the first invertible matrix had been $\bD^{(1)}$ ($r=1$) or
$\bD^{(5)}$ ($r=3$). If the kernel is completely smooth, then:
\[   \bL(\varepsilon) = \sum_{p=0}^\infty \varepsilon^{2p}f_{2p}\bD^{(2p)}  \]
and odd terms never appear. This again has its own asymptotic behaviour. A subtle issue is that if the matrix under
consideration is small enough compared to the regularity order, then the
asymptotics are the same than in the completely smooth case. We invite the
reader to pay attention to the interplay between $m$ (the size of the L-ensemble) and
$r$ (the regularity order) in the following results. For more on the flat asymptotics of kernel matrices, we refer again to \cite{BarthelmeUsevich:KernelsFlatLimit}. 

\subsubsection{Vandermonde matrices}
\label{sec:univar-poly}

 The Vandermonde matrix of order $k$ is defined as:
\begin{equation}\label{eq:Vandermonde1D}
  \matr{V}_{\le k} = \begin{bmatrix}
    1 & x_1 & \cdots & x^{k}_1\\
    \vdots &  & \vdots \\
    1 & x_n & \cdots & x^{k}_n 
  \end{bmatrix},
\end{equation}
where $x_1, \ldots, x_n$ are the $n$ points of the ground set $\Omega$ (we also use
the notation $\bV_{<k} = \bV_{\leq k-1}$).
Note that $\matr{V}_{\le k}$ has $k+1$ columns. 
The ``classical'' Vandermonde matrix is obtained for $k=n-1$, which makes it square.
$\bV_{\le n-1}$ is invertible if and only if the points in $\Omega$ are distinct, which can
be established from the following well-known determinantal formula:
\begin{equation}
  \label{eq:VdM-determinant}
  \det \matr{V}_{\le n-1} = \prod_{i<j} (x_i-x_j)
\end{equation}
As short-hand, we shall define $\vect{v}_l=
\begin{pmatrix}
  x_1^{l}, \ldots, x_n^{l}
\end{pmatrix}^\top\in\mathbb{R}^{n}$, such that \begin{equation}\label{eq:Vandermonde1D}
\matr{V}_{\le k} = \left[
\vect{v}_0 | \vect{v}_1 | \ldots | \vect{v}_{k}
\right].
\end{equation}
Submatrices of $\bV_{\leq k}$ corresponding to a subset of points $X$ will be
denoted $\bV_{\leq k}(X) \in \R^{|X| \times (k+1) }$.


\subsection{The flat limit in the fixed-size case}
\label{sec:univariate-fixed-size}

Consider $\X_\varepsilon \sim \mDPP{m}(\bL(\varepsilon))$ with $m\leq n$ and $m$ and $n=|\Omega|$ fixed (no large $n$ asymptotics are involved here). We are interested in
the limiting distribution of $\X_\varepsilon$ as $\flatlim$.

It is not at first blush obvious that the limiting point process exists
and is non-trivial. Indeed, as $\varepsilon\rightarrow0$, every entry of the
matrix $\bL(\varepsilon)$ goes to 1, and so $\det(\bL(\varepsilon)_X)$ goes to 0 for all
subsets $X$. What makes the limit non-trivial is, as we shall see in the
proofs, that these quantities go to 0 at different speeds.

The first result characterises the smooth case, where the smoothness order of
the kernel is larger than $m$ (recall that this applies to the Gaussian kernel,
for instance)
\begin{corollary}
  \label{cor:cs-case-fixed-size-1d}
  Let $\bL_\varepsilon = [\kappa_\varepsilon(x_i,x_j)]_{i,j}$ with $\kappa$ a
  stationary kernel of smoothness order $r \geq m$. Then $\Xe \sim
  \mDPP{m}(\bL_\varepsilon)$ converges to $\Xs \sim \mDPP{m}(\bV_{< m}
  \bV_{< m}^\top)$.   
\end{corollary}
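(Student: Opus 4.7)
The plan is to analyse $\det(\bL(\varepsilon)_X)$ for each $X \subset \Omega$ with $|X|=m$, extract its leading $\varepsilon$-order, and normalise via the $m$-DPP formula $\Proba(\Xe = X) = \det(\bL(\varepsilon)_X)/\sum_{|Y|=m}\det(\bL(\varepsilon)_Y)$. First I would split $\bL(\varepsilon) = \bA(\varepsilon) + \bB(\varepsilon)$, where $\bA$ gathers the even-power terms $\sum_{k \ge 0}\varepsilon^{2k}f_{2k}\bD^{(2k)}$ and $\bB(\varepsilon) = O(\varepsilon^{2r-1})$ collects the odd contributions (with $2r-1 \ge 2m - 1$ by hypothesis). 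The binomial identity $(x_i - x_j)^{2k} = \sum_\ell\binom{2k}{\ell}(-1)^\ell x_i^{2k-\ell}x_j^\ell$ factors each $\bD^{(2k)}$ as $\bV\bA_k\bV^\top$ with $\bV = \bV_{\leq n-1}$ and $\bA_k$ an $n \times n$ constant matrix supported on the anti-diagonal $\{(i,j):i+j=2k\}$; summing yields $\bA(\varepsilon) = \bV\bK(\varepsilon)\bV^\top$ where $\bK(\varepsilon)_{ij} = \varepsilon^{i+j}c_{ij}$ and $c_{ij} = 0$ whenever $i+j$ is odd.

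The central computation applies Cauchy--Binet to $\bA(\varepsilon)_X = \bV(X)\bK(\varepsilon)\bV(X)^\top$. Since $\det(\bK(\varepsilon)_{I,J}) = \varepsilon^{s_I+s_J}\det([c_{ij}]_{i \in I, j \in J})$ with $s_I = \sum_{i \in I} i \ge \binom{m}{2}$, and the minimum $s_I + s_J = m(m-1)$ is uniquely attained at $I = J = \{0,1,\ldots,m-1\}$, one obtains
\[
\det(\bA(\varepsilon)_X) \;=\; \varepsilon^{m(m-1)}\, C\, \det(\bV_{\leq m-1}(X))^2 \;+\; O(\varepsilon^{m(m-1)+1}),
\]
where $C = \det([c_{ij}]_{0 \le i,j \le m-1})$ depends only on $f_0, f_2, \ldots, f_{2m-2}$ and is $X$-independent.

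The main obstacle is showing that the odd perturbation $\bB$ contributes only at strictly higher order. Using the generalised Laplace expansion, any contribution to $\det(\bL(\varepsilon)_X)$ that uses $k \ge 1$ columns of $\bB_X$ is bounded by a product of a $k \times k$ minor of $\bB_X$, of size $O(\varepsilon^{k(2r-1)})$, and an $(m-k)\times(m-k)$ minor of $\bA_X$, itself of order $\varepsilon^{(m-k)(m-k-1)}$ by the same Cauchy--Binet argument applied one dimension lower. Comparing exponents,
\[
k(2r-1) + (m-k)(m-k-1) > m(m-1) \iff k(2r-2m+k) > 0,
\]
and the binding case $k=1$ requires exactly $r \ge m$, which is the hypothesis. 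Consequently, the factor $\varepsilon^{m(m-1)} C$ cancels in the normalisation, giving
\[
\Proba(\Xe = X) \;\xrightarrow[\flatlim]{}\; \frac{\det(\bV_{\leq m-1}(X))^2}{\sum_{|Y|=m}\det(\bV_{\leq m-1}(Y))^2},
\]
which is precisely the probability mass function of $\mDPP{m}(\bV_{\leq m-1}\bV_{\leq m-1}^\top)$, via the identity $\det((\bV_{\leq m-1}\bV_{\leq m-1}^\top)_Y) = \det(\bV_{\leq m-1}(Y))^2$ valid for $|Y| = m$.
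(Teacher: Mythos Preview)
Your argument is a direct, self-contained computation, whereas the paper simply records this corollary as a special case of its general Theorem~\ref{thm:general-case-smooth-fixed-size} (whose proof in turn rests on the flat-limit determinant asymptotics of \cite{BarthelmeUsevich:KernelsFlatLimit}). Your route is more elementary and more transparent in this univariate setting: the Cauchy--Binet step isolates the leading Vandermonde minor cleanly, and the exponent comparison $k(2r-2m+k)>0$ makes the role of the hypothesis $r\ge m$ completely explicit (indeed, $k=1$ is exactly where $r\ge m$ is needed). The price is that your approach does not extend in any obvious way to the multivariate results of Section~\ref{sec:results-multivariate}, where the monomial basis has no canonical total order and the interaction between smoothness and polynomial degree is more delicate; the paper's general theorem is built to handle both regimes at once.

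There is, however, one genuine gap. Your conclusion hinges on the leading constant $C=\det\bigl([c_{ij}]_{0\le i,j\le m-1}\bigr)$ being nonzero: if $C=0$, the $\varepsilon^{m(m-1)}$ contribution vanishes identically in $X$, the ratio $\det(\bL(\varepsilon)_X)\big/\sum_Y\det(\bL(\varepsilon)_Y)$ is governed by the next order, and nothing you have written forces that next order to reproduce the Vandermonde limit. You note that $C$ is $X$-independent but never verify $C\ne 0$. Writing $c_{ij}=\tfrac{(-1)^j}{i!\,j!}(i+j)!\,f_{i+j}\,\mathbb{1}[i+j\text{ even}]$ reduces the question to the nonvanishing of a Hankel determinant in the sequence $(2k)!f_{2k}$; this does hold when $\kappa$ is strictly positive definite, but it is not automatic from the Taylor expansion alone and is precisely one of the facts supplied by \cite{BarthelmeUsevich:KernelsFlatLimit}. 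You should either invoke that result explicitly or give the positivity argument. A smaller point in the same vein: the factorisation $\bA(\varepsilon)=\bV_{\le n-1}\bK(\varepsilon)\bV_{\le n-1}^\top$ only captures the even terms up to degree $2(n-1)$; for the leading-order analysis it suffices to truncate at any degree exceeding $m(m-1)$ and absorb the tail into the $O(\varepsilon^{m(m-1)+1})$ remainder, but this should be said.
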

\begin{proof}
Subcase of Theorem~\ref{thm:general-case-smooth-fixed-size}.
\end{proof}
\begin{remark}
  The result says that as $\flatlim$ the limiting point process is (a) a fixed-size L-ensemble\footnote{and even a projection DPP as $\bV_{< m} \bV_{< m}^\top$ is of rank $m$ (see Lemma~\ref{lem:max-rank-dpp})}
  and (b) the positive semi-definite matrix it is based on is a Vandermonde matrix of
  $\Omega$. It is worth studying this matrix in greater detail. Let
  $\matr{M} = \bV_{< m} \bV_{< m}^\top$. Then for any subset $X \subset
  \Omega$ of size $m$, $\det \matr{M}_X=\det^2(\bV_{< m}(X))$, because
  $\bV_{< m}(X)$ is a square matrix. From the Vandermonde determinant
  formula (eq. \eqref{eq:VdM-determinant}), this means that if $\X \sim \mDPP{m}(\matr{M})$,
  \begin{equation}
    \label{eq:cs-case-joint-prob}
    \Proba\left(\X=X\right) = \frac{1}{Z} \prod_{(x,y)\in X^2} (x - y)^2
\end{equation}
  
\end{remark}
\begin{remark}
  Consider the conditional inclusion probability for a single point $\Proba(\Xe = \{x\} \cup Y |
  Y)$, to be read as the conditional
  probability that $\Xe = \{ x\} \cup Y$ given that $Y \subseteq X_\epsilon$. In
  the flat limit, this quantity tends to:
  \[ \Proba\left(\Xs = \{x\} \cup Y | Y\right) \propto \prod_{y \in Y} (x-y)^2\]
  which corresponds to a repulsive point process (since small distances between
  points are unlikely). 
\end{remark}
To summarise: if we sample a fixed-size L-ensemble of size $m$, and the kernel is
regular enough compared to $m$ (\emph{i.e.}, $r\geq m$), then \emph{whatever} the kernel the
limiting process exists and is the same\footnote{The ``whatever the kernel''
  part becomes more complicated in the multidimensional case, as we shall see.}. The probability of sampling a set $X$
is just proportional to a squared Vandermonde determinant, and that defines a projection DPP.

The next theorem describes what happens when the kernel is less smooth. We obtain a partial projection
DPP, where the projective part comes from polynomials, and the non-projective
part comes from the first nonzero odd term in the kernel expansion (see Eq.~\eqref{eq:kernel-matrix-expansion}).

\begin{corollary}
  \label{cor:fs-case-fixed-size-1d}
  Let $\bL_\varepsilon = [\kappa_\varepsilon(x_i,x_j)]_{i,j}$ with $\kappa$ a
  stationary kernel of smoothness order $r \leq m$. 
  Then $\Xe \sim
  \mDPP{m}(\bL_\varepsilon)$ converges to $\Xs \sim \mppDPP{m} \ELE{\bD^{(2r-1)}} {\bV_{< r}}$.   
\end{corollary}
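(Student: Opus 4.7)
The plan is to expand $\det \bL(\varepsilon)_X$ as a formal series in $\varepsilon$ for each size-$m$ subset $X \subseteq \Omega$, identify the leading-order term, and verify that the resulting probability mass coincides with that of the claimed partial projection DPP. The essential structural fact I will rely on is that $\bD^{(2k)}$ is polynomial (and therefore rank-deficient, with column space inside the span of $\bV_{\leq 2k}$), while the first odd term $\bD^{(2r-1)}$ is invertible on any set of $n$ distinct points \cite{BarthelmeUsevich:KernelsFlatLimit}.

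\textbf{Step 1 (decomposition).} Using the smoothness assumption $f_1=f_3=\cdots=f_{2r-3}=0$, write
\[
  \bL(\varepsilon) \;=\; \sum_{k=0}^{r-1} \varepsilon^{2k} f_{2k}\, \bD^{(2k)} \;+\; \varepsilon^{2r-1} f_{2r-1}\, \bD^{(2r-1)} \;+\; O(\varepsilon^{2r}).
\]
The binomial identity $(x_i-x_j)^{2k}=\sum_{l=0}^{2k}\binom{2k}{l}(-1)^l x_i^{2k-l}x_j^l$ factors each even matrix as $\bD^{(2k)} = \bV_{\leq 2k} \bB_k \bV_{\leq 2k}^\top$ for an explicit binomial matrix $\bB_k$, which fixes the polynomial structure of the ``smooth'' part.

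\textbf{Step 2 (multilinear expansion and cancellations).} For $|X|=m$, apply column-multilinearity:
\[
  \det \bL(\varepsilon)_X \;=\; \sum_{\vec{c}} \varepsilon^{|\vec{c}|}\left(\prod_{j=1}^{m} f_{c_j}\right) \det\!\bigl[\bD^{(c_1)}_X \vect{e}_1,\,\ldots,\,\bD^{(c_m)}_X \vect{e}_m\bigr],
\]
where $c_j\in\{0,2,\ldots,2r-2,\,2r-1,2r,\ldots\}$ and $|\vec{c}| = c_1+\cdots+c_m$. The key combinatorial observation is that many apparently low-order signatures cancel. For example, when $r=2$ one checks directly that summing over the three permutations of the signature $(0,2,2)$ produces $2[(y_3-y_2)-(y_3-y_1)+(y_2-y_1)]\det\bV_{\leq 2}(X)=0$ by alternation. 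In general the signatures that would be supported on $\bV_{\leq 2r-2}$ but use degrees $r,\ldots,2r-2$ cancel out, forcing the effective polynomial space to shrink from $\bV_{\leq 2r-2}$ down to $\bV_{\leq r-1}$.

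\textbf{Step 3 (leading order).} After the Step~2 cancellations, the surviving terms of lowest total order $\alpha$ are exactly those in which $r$ columns supply the polynomial degrees $0,1,\ldots,r-1$ (at minimum cost in $\varepsilon$) and the remaining $m-r$ columns are drawn from the full-rank $\bD^{(2r-1)}$. A Cauchy-Binet computation on these surviving terms identifies the leading coefficient as (up to a global $X$-independent constant $C$)
\[
  [\varepsilon^\alpha]\,\det \bL(\varepsilon)_X \;=\; C \cdot \det\!\begin{pmatrix}\bD^{(2r-1)}_X & \bV_{\leq r-1}(X) \\ \bV_{\leq r-1}(X)^\top & \bm{0}\end{pmatrix},
\]
which is exactly the unnormalised mass of the partial projection DPP $\mppDPP{m}\ELE{\bD^{(2r-1)}}{\bV_{\leq r-1}}$.

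\textbf{Step 4 (conclusion).} Since the common factor $C\varepsilon^\alpha$ is $X$-independent, it divides out of $\Proba(\Xe=X) = \det\bL(\varepsilon)_X / \sum_{Y} \det\bL(\varepsilon)_Y$, and letting $\flatlim$ gives the claimed convergence.

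The main obstacle is Step~2: rigorously tracking the alternation-driven cancellations that reduce the effective Vandermonde space from the a priori $\bV_{\leq 2r-2}$ down to $\bV_{\leq r-1}$, and doing so uniformly in $X$ so that a single leading power $\varepsilon^\alpha$ governs all size-$m$ minors. This is precisely the combinatorial content that will be handled by the general theorem in Section~\ref{sec:two_general_theorems}, of which Corollary~\ref{cor:fs-case-fixed-size-1d} is the univariate, finitely-smooth subcase.
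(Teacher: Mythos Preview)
Your proposal ultimately lands in the same place as the paper: the corollary is a subcase of Theorem~\ref{thm:general-case-smooth-fixed-size}, and you say so explicitly in your final paragraph. In that sense the two ``proofs'' agree.

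What you add on top --- Steps~1--4 --- is a heuristic sketch of the mechanism behind the general theorem rather than an independent proof. Step~1 is unproblematic. Steps~2 and~3, however, are asserted rather than demonstrated: the $(0,2,2)$ example is suggestive but not a general argument, the claim that the surviving polynomial space collapses from $\bV_{\leq 2r-2}$ to $\bV_{\leq r-1}$ is exactly the hard part, and the ``Cauchy--Binet computation'' that produces the saddle-point determinant is stated without details (in particular, one must also check that the leading power $\alpha$ is the \emph{same} for every $X$ of size $m$, and that the limiting quantity is nonnegative so that the $\mppDPP{m}$ normalisation makes sense). You correctly flag all of this as ``the main obstacle'' and defer it to Section~\ref{sec:two_general_theorems}. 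So the sketch is honest, but it should be read as motivation for the general theorem, not as a self-contained argument; if the intent is simply to record the corollary, the paper's one-line deferral is already sufficient.
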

\begin{proof}
 Subcase of Theorem~\ref{thm:general-case-smooth-fixed-size}.
\end{proof}
\begin{example}
  In the case of the exponential kernel $\kappa_{\varepsilon}(x,y) = e^{ -\varepsilon |x-y| }$, $r=1$, and the theorem states:
  \begin{equation}
    \label{eq:extended-L-ensemble-exp-kernel}
    \Proba(\Xs = X) \propto \det
    \begin{pmatrix}
      -\bD_{X}^{(1)} & \ones \\
      \ones^\top & 0
    \end{pmatrix}
  \end{equation}
\end{example}

\begin{remark}
  \label{rem:exp-kernel-d1}
  Some algebra reveals that
  \begin{equation}
    \label{eq:det-exp-kernel}
    \det
    \begin{pmatrix}
      -\bD_{X}^{(1)} & \ones \\
      \ones^\top & 0
    \end{pmatrix}
    = 2^{m-1} \prod_{i = 1}^m (x_{i+1} - x_i)
  \end{equation}
  where in the last expression we have sorted the points in $X$ so that $x_1
  \leq x_2 \leq \ldots \leq x_m$. 
  As in  \eqref{eq:cs-case-joint-prob} above, the repulsive nature of the limit
  point process is immediately apparent from eq. \eqref{eq:det-exp-kernel}.
  Unlike \eqref{eq:cs-case-joint-prob}, which involves all distances, eq.
  \eqref{eq:det-exp-kernel} only involves distances between direct neighbours.
  We speculate that similar expressions exist for $r>1$, but have unfortunately
  been unable to derive them.
\end{remark}
\begin{proof}
  Eq. \eqref{eq:det-exp-kernel} may be derived by using a finite
  difference operator of the form:
  \[
    \matr{F} =
    \begin{pmatrix}
      1 & 0 & \ldots \\
      \frac{-1}{\delta_1} & \frac{1}{\delta_1} & 0 & \ldots \\
      0 & \frac{-1}{\delta_2} & \frac{1}{\delta_2} & 0 & \ldots \\
      \vdots & \vdots & \vdots & \vdots
    \end{pmatrix}
  \]
  where $\delta_i = x_{i+1}-x_i$. Since $\matr{F}$ is lower-triangular, $\det
  \matr{F}= \prod_{i=1}^{m-1} \delta_i^{-1}$.  Then applying
  lemma \ref{lem:det-saddlepoint} to
  \[ \det(
    \begin{bmatrix}
      \matr{F} & 0 \\
      0 & 1
    \end{bmatrix}
    \begin{bmatrix}
      -\bD_{X}^{(1)} & \ones \\
      \ones^\top & 0
    \end{bmatrix}
    \begin{bmatrix}
      \matr{F}^\top & 0 \\
      0 & 1
    \end{bmatrix}
    ) \] and simplifying yields the result.
\end{proof}

\subsection{Some numerical illustrations}
\label{sec:numerics-1d}

To illustrate the convergence theorems above, a good visual tool is to examine
the convergence of conditional distributions of the form:
\begin{equation}
  \label{eq:cond-law-dpp}
  \Proba(\X = \{x\} \cup Y | Y) \propto \det \bL_{\{x\} \cup Y} \propto  (L_{x,x}-\bL_{x,Y}\bL_{Y}^{-1} \bL_{Y,x})
\end{equation}

This should be interpreted as the conditional probability of the $m$-th item
fixing the first $m-1$. The conditional law $\Proba(\Xe = \{x\} \cup Y | Y)$ tends
to that of $\Proba(\Xs = \{x\} \cup Y | Y)$, and in dimension 1 we can depict this 
 as a function of $x$.

We do so in figure \ref{fig:convergence-cond-1d}, where we assume $\X$ is a
$m=5$ fixed-size L-ensemble, and the ground set is a finite subset of $[0,1]$. The conditioning subset $Y$
is chosen to be of size 4, 
and for the sake of illustration, we let $x$ vary as a continuous parameter in $[0,1]$.
 The four panels correspond to four different kernel
functions. The conditional probability is plotted for different values of $\varepsilon$. In all plots we observe a rapid convergence
with $\varepsilon$. In the top panel, the difference between the asymptoptics obtained for $r=1$ and
$r = \infty$ are quite striking. In the bottom panel, we have two different
kernels with identical smoothness index, and as predicted by Corollary
\ref{cor:cs-case-fixed-size-1d} the $\flatlim$ limits are identical.
 
\begin{figure}
  
  \centering
  \begin{subfigure}[b]{7.0cm}
    \includegraphics[width=7.0cm]{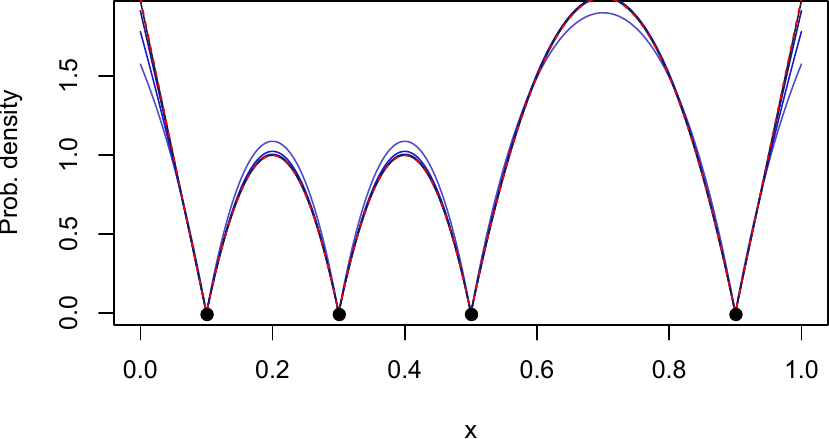}
    \caption{$k(x,y)=\exp(-|x-y|)$, a kernel with $r=1$}
  \end{subfigure}
  \begin{subfigure}[b]{7.0cm}
    \includegraphics[width=7.0cm]{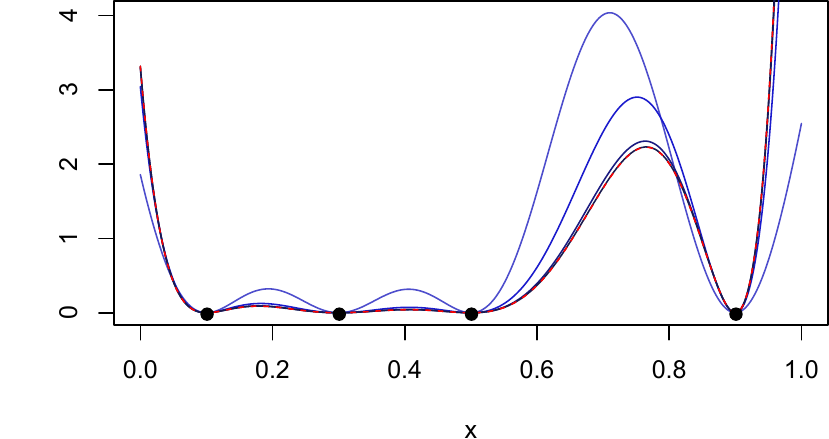}
    \caption{$k(x,y)=\exp(-(x-y)^2)$,  $r=\infty$}
  \end{subfigure}
  \\
  \begin{subfigure}[b]{7.0cm}
    \includegraphics[width=7.0cm]{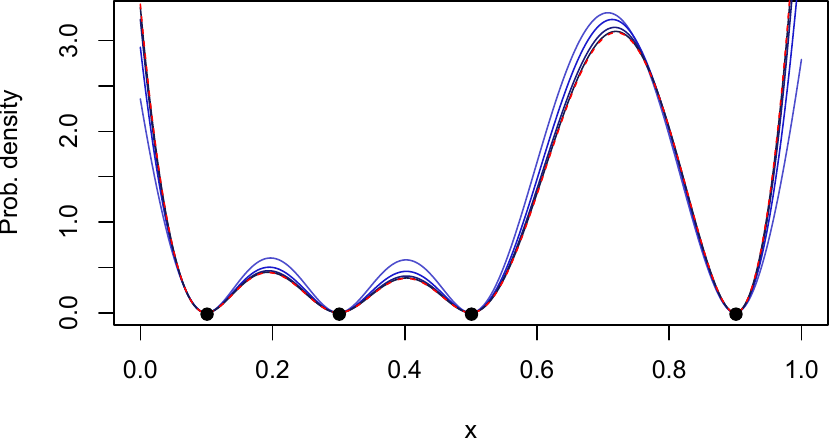}
    \caption{$k(x,y)=(1+|x-y|)\exp(|x-y|)$,  $r=2$}
  \end{subfigure}
  \begin{subfigure}[b]{7.0cm}
    \includegraphics[width=7.0cm]{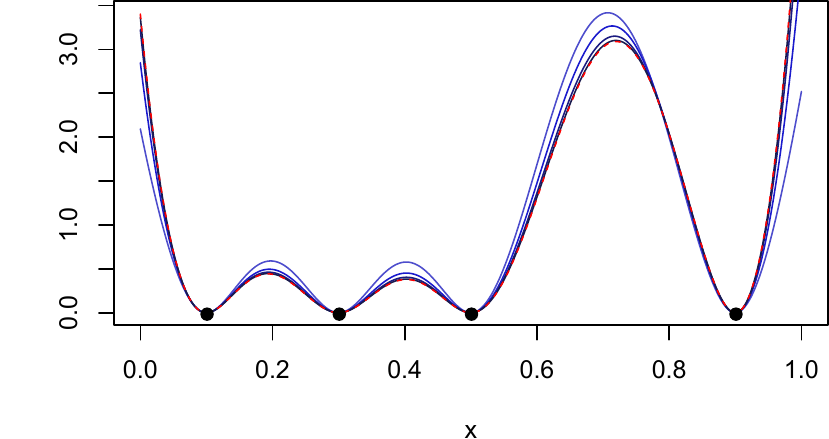}
    \caption{$k(x,y)=\sin(|x-y|+\frac{\pi}{4})\exp(-|x-y|)$,  $r=2$}
  \end{subfigure}

  \caption{Asymptotics of conditional densities of fixed-size L-ensembles based on four different
    kernels. Here we plot $\Proba(\Xe = \{x\} \cup Y | Y)$, the conditional density of a fixed size L-ensemble
    (with $m=5$) where four of the points are fixed ($Y$) and the last is varying
    $(x)$. The points in $Y$ are at $0.1,0.3,0.5,0.9$. The curves in blue are
    the conditional densities for different values of $\varepsilon$:
    $4,1.5,.5,.1$. The dotted red line is
  the asymptotic limit in $\flatlim$. Note that the two kernels in the bottom row have
the same regularity coefficient $r=2$, and as predicted by the results the
limiting densities are equal.}
\label{fig:convergence-cond-1d}

\end{figure}

Another set of quantities that are easy to examine visually are the first order
inclusion probabilities ($\Proba(x \in \X)$). We refer to
\cite{Barthelme:AsEqFixedSizeDPP} for how to compute these quantities in
fixed-size L-ensembles. Since $\Xe$ converges to $\Xs$, so must the inclusion
probabilities, and this is shown in figure \ref{fig:convergence-inclusion-1d} for
three kernels with increasing values of $r$.  For these plots, the ground set consists in 20 points drawn at random in the unit interval.
We depict the first order inclusion probabilities for four different values of $\varepsilon$. Rapid convergence with $\varepsilon$ is also observed. 
\begin{figure}
  \includegraphics[width=12cm]{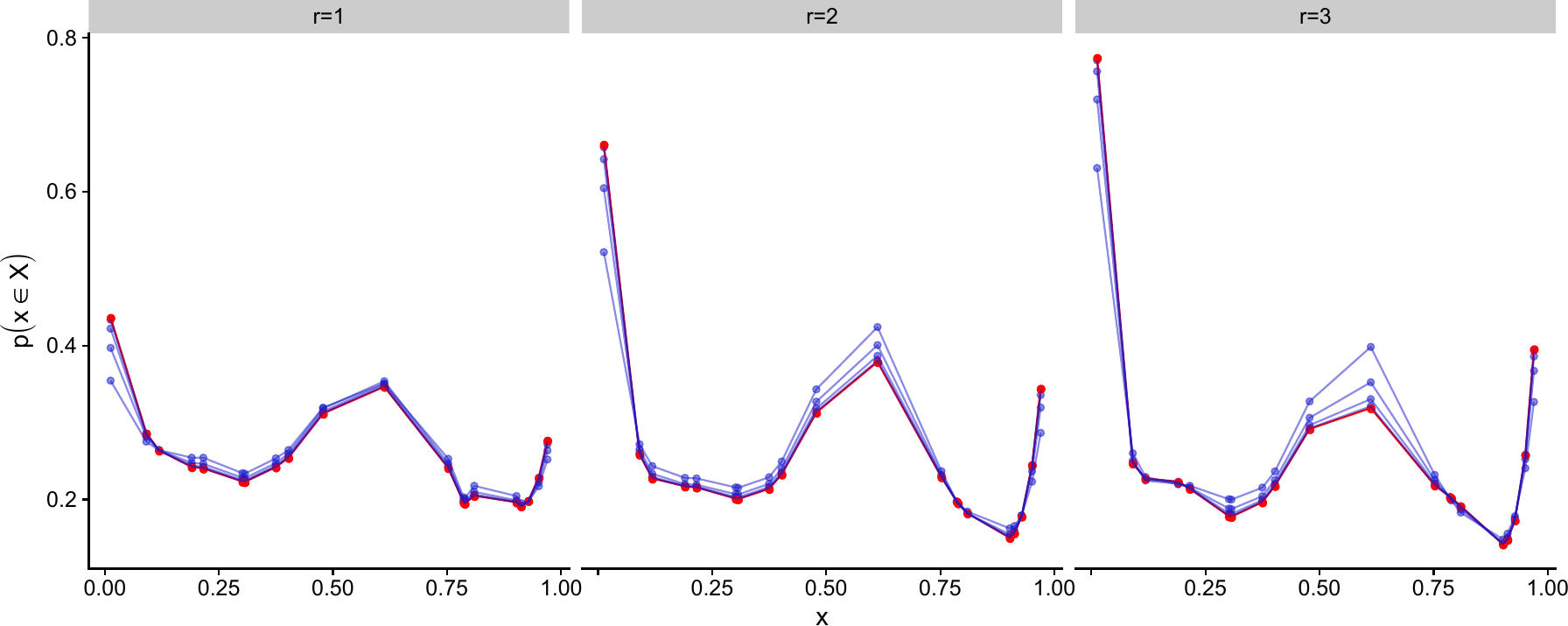}
  \caption{
    Flat limit of inclusion probabilities of (fixed-size) L-ensembles for three different
    kernels. Here we plot $\Proba(x \in \Xe)$, the inclusion probabilities for a fixed size L-ensemble
    (with $m=5$), where the ground set $\Omega$ consists in 20 points drawn at
    random from the unit interval. The dots in blue (joined by lines for
    clarity) are inclusion probabilities for 
    $\varepsilon = 4,1.5,.5,.1$. The dots in red correspond to
  the asymptotic limit in $\flatlim$. The three kernels are, from left-to-right,
$\exp(-\delta), (1+\delta)\exp(-\delta),(3+3\delta+\delta^2)\exp(-\delta)$,
where $\delta = |x - y|$. These kernels have $r=1$, $2$ and $3$, respectively.}
\label{fig:convergence-inclusion-1d}
\end{figure}

\section{The flat limit of fixed-size L-ensembles (multivariate case)}
\label{sec:results-multivariate}

The univariate results stated above have a multivariate generalisation, and
in some cases they are almost the same. The only major difference is that in the
univariate case, the \emph{only} aspect of the kernel function that plays a role
in determining the limiting process is the smoothness order $r$. Two kernels may
look different, but if they have the same smoothness order they have the same
limiting DPP. 
When $d>1$ this is no longer always true. The limiting process may \emph{sometimes}
depend on the specific values of the derivatives of the kernel at 0 (not just whether they exist). 
Sometimes, but not always: for
instance, all kernels with $r=1$ give the same limiting fixed-size DPP. All kernels
with $r=2$ give the same limiting fixed-size ($m$) L-ensemble, as long as $m>d$. The case of
infinitely smooth kernels is particularly intriguing: there is a universal
limiting process, but only for $m$ in a set of ``magic'' values $\magicn{d}$ to be defined below. 
 When $m$ falls in between these values, then the
limiting process depends on the kernel (although perhaps not strongly).

To build a picture of what the final results look like, we state the easiest first:
\begin{example}
  Let $d>0$ and $\vect{x}_1, \ldots, \vect{x}_n$ be $n$ points in dimension $d$. Let $\bL_\varepsilon = [\kappa_\varepsilon(\vect{x}_i,\vect{x}_j)]_{i,j}$ with
  $\kappa$ a stationary kernel of smoothness order $r = 1$. 
    Then $\Xe \sim  \mDPP{m}(\bL_\varepsilon)$ converges to $\Xs \sim
  \mppDPP{m}  \ELE{-\bD}{\ones} $, where $\bD$ is the distance matrix $\bD = \left[
  \norm{\vect{x}_i-\vect{x}_j}\right]_{i,j}$. 
\end{example}
A more general statement is given later, but this one has the advantage of being
identical to the univariate result. 

As the more general statements are also more complicated, we present our
results in increasing order of complexity. The general theorem is Theorem~\ref{thm:general-case-smooth-fixed-size}, and all results we state in this section (including the above) are
special cases. But before delving into this, we need to recall some aspects of Vandermonde matrices in higher dimensions and introduce the magic numbers $\magicn{d}$. 

\subsection{Multivariate polynomials, multivariate Vandermonde matrices}~
\label{sec:multivar-poly} 

\noindent\textbf{Multivariate polynomials.} We
recall here the essential facts on multivariate polynomials. 
Let $\vect{x} =
\begin{pmatrix}
x_1 & x_2 & \ldots & x_d
\end{pmatrix}^\top
\in \R^d $. A monomial in $\vect{x}$ is a function of
the form:
\[ \vect{x}^{\vect{\alpha}} = \prod_{i=1}^d x_i^{\alpha_i}  \]
for $\vect{\alpha} \in \mathbb{N}^d$ (a multi-index). Its total degree (or degree for short) is defined as
$|\vect{\alpha}|=\sum_{i=1}^d \alpha_i$. For instance:
\[ \vect{x}^{(2,1)} = x_1^2x_2 \]
and it has degree 3. A multivariate polynomial in $\vect{x}$ is a weighted sum
of monomials in $\vect{x}$, and its degree is equal to the maximum of the
degrees of its component monomials. For instance, the following is a
multivariate polynomial of degree 2 in $\R^3$:
\[ \vect{x}^{(0,1,1)}-\vect{x}^{(1,0,1)}+2.2\vect{x}^{(1,0,0)}-1.\]
One salient difference between the univariate and the multivariate case is that
when $d>1$, there are several monomials of any given degree, instead of just
one. For instance, with $d=2$, the first few monomials are (by increasing
degree):
\begin{align*}
\vect{x}^{(0,0)} \\
\vect{x}^{(1,0)},\vect{x}^{(0,1)} \\
\vect{x}^{(2,0)},\vect{x}^{(1,1)},\vect{x}^{(0,2)}\\
\end{align*}
There is a well-known formula for counting monomials of degree $k$ in dimension $d$:
\begin{equation}
\label{eq:number-monomials}
\HH_{k,d} = {k+d-1 \choose d-1}.
\end{equation}
The notation $\HH_{k,d}$ comes from the notion of \emph{homogeneous polynomials}. A
homogeneous polynomial is a polynomial made up of monomials with equal degree.
Therefore, the set of homogeneous polynomials of degree $k$ has dimension
$\HH_{k,d}$. The set of polynomials of degree $k$ is spanned by the sets of
homogenous polynomials up to $k$, and has dimension:
\begin{equation}
\label{eq:dim-polynomials}
\PP_{k,d} = \HH_{0,d} + \HH_{1,d} + \ldots + \HH_{k,d} = {k+d \choose d}.
\end{equation}
Note for instance that $\PP_{0,d}=1$ and $\PP_{1,d}=d+1$. Note also that when the dimension equals $1$, one recovers that the set of univariate polynomials of degree $k$ has dimension $k+1$. By convention, we will set $\PP_{-1,d}$ to be equal to $0$.\\

\noindent\textbf{Multivariate Vandermonde matrices.} We  now  define the multivariate generalisation of Vandermonde
matrices. Monomials are naturally ordered by degree, but monomials of the same
degree have no natural ordering. To properly define our matrices, we require
(formally) an ordering. For the purposes of this paper which ordering is used is
entirely arbitrary. For more on orderings, see
\cite{BarthelmeUsevich:KernelsFlatLimit} and references therein.
For an ordered set of points $\Omega = \{\vect{x}_1, \ldots, \vect{x}_n\}$, all in $\RR^d$, we define the multivariate Vandermonde matrix as:
\begin{equation}\label{eq:VandermondeND}
\matr{V}_{\le k} = 
\begin{bmatrix}
\matr{V}_{0} & \matr{V}_{1}  & \cdots & \matr{V}_{k}  
\end{bmatrix} \in \mathbb{R}^{n\times \PP_{k,d}},
\end{equation}
where each block $\matr{V}_i \in\mathbb{R}^{n\times \HH_{i,d}}$ contains the monomials of degree $i$ evaluated on
the points in $\Omega$ (we will also use the notation $\matr{V}_{< k}=\matr{V}_{\le k-1}$). As an example, consider $n=3$, $d=2$ and the ground set
\[
\Omega = \{\left[\begin{smallmatrix}y_1 \\z_1 \end{smallmatrix}\right], \left[\begin{smallmatrix}y_2 \\z_2 \end{smallmatrix}\right], \left[\begin{smallmatrix}y_3 \\z_3 \end{smallmatrix}\right]\}.
\]
One has, for instance for $k=2$:
\[
\matr{V}_{\le 2} =
\left[\begin{array}{c|cc|ccc}
1 & y_1 & z_1 & y_1^2 & y_1z_1 & z_1^2 \\
1 & y_2 & z_2 & y_2^2 & y_2z_2 & z_2^2 \\
1 & y_3 & z_3 & y_3^2 & y_3z_3 & z_3^2 \\
\end{array}\right],
\]
where the ordering within each block is arbitrary. 

As in the previous section, we use $\bV_{\le k}(X)$ to denote the matrix $\bV_{\le k}$ reduced to its lines indexed by the elements in $X$. As such, $\bV_{\le k}(X)$ has $|X| = m$ rows and $\PP_{k,d} $ columns. For some values of $m$ and $k$ it is square and (potentially) invertible. For instance, consider $\bV_{\leq k}$ as in Eq.~\eqref{eq:VandermondeND}, with $k=1$ and $d=2$. Choosing a subset $X$ of size $m=3$, the matrix $\bV_{\leq 1}(X)$ is square. In dimension 2, there exists a square Vandermonde matrix for sets $X$ of size $m=1$, $3$, $6$, $10$, $15$, $21$, etc.
	
In fact, for any dimension $d$, there exists a square Vandermonde matrix for any size $m$ such that there exists $k\in\mathbb{N}$ verifying $\PP_{k,d} =m$, that is, any $m$ included in the set of integers:
\begin{equation}
\label{eq:magic-numbers}
\magicn{d} = \left\{ \PP_{k,d} | k \in \mathbb{N}\right\}.
\end{equation}
We will see that these values of $m$ are in some sense natural sizes for L-ensembles, because they
lead to universal limits, and that is the reason for calling them magic numbers. 

We note in passing that while we may easily determine whether $\bV_{\leq k}(X)$ is
square, whether it is invertible is a complicated question that depends on the
geometry of the points $X$, as there are some non-trivial configurations for
which it is not \cite{gasca2000polynomial}. The results below show that such configurations
 have probability 0 in the flat limit under any L-ensemble with $r$ sufficiently large compared to $m$.  \\

\subsection{Universal (easy) limits}
\label{sec:universal-limits}

The following result applies when the kernel is sufficiently smooth and the L-ensemble has
fixed size $m \in \magicn{d}$.
\begin{corollary}
  \label{cor:nd-smooth-magic-case}
  Let $d\in \mathbb{N}^*$ and $\bL_\varepsilon =
  [\kappa_\varepsilon(\vect{x}_i,\vect{x}_j)]_{i,j}$ for $\kappa$ a stationary
  kernel of smoothness order $r$ and $\vect{x}_1, \ldots, \vect{x}_n$
  vectors in $\R^d$. Then for all $m\in \{\PP_{k,d}\}_{k\leq r-1} \subset\magicn{d}$, the fixed-size L-ensemble $\X_\varepsilon \sim
  |DPP|_m(\bL(\varepsilon))$ has the limiting distribution:
  \[ \Xs \sim |DPP|_m(\bV_{\leq k}\bV_{\leq k}^\top)\]
  Equivalently, if $\bQ$ is an orthonormal basis for $\bV_{\leq k}$, then:
  \[ \Xs \sim |DPP|_m(\bQ\bQ^\top)\]
\end{corollary}
\begin{proof}
 Subcase of Theorem~\ref{thm:general-case-smooth-fixed-size}.
\end{proof}
\begin{remark}
  Since $\bV_{\leq k}$ is a polynomial basis, $\bQ$ is a basis of orthogonal
  polynomials. The limiting process we see appearing here is the same as the one
  studied in \cite{tremblay2019determinantal} in the discrete case. A similar
  theorem can be proved for continuous DPPs, essentially by tediously changing
  the notation, and leads to the multivariate orthogonal ensembles studied in
  \cite{bardenet2020monte}. What this means is that the properties proved in
   \cite{bardenet2020monte} (good properties for integration)
    and
 \cite{tremblay2019determinantal}  (asymptotic rebalancing) also hold for any
  sufficiently smooth kernel in the flat limit, at least for L-ensembles of fixed-size $m \in \magicn{d}$.
\end{remark}

The case of kernels with finite smoothness is different but still simply written if $m$ is greater than 
$\PP_{r-1,d}$:
\begin{corollary}
  \label{cor:nd-finite-smooth-magic-case}
  Let $d\in \mathbb{N}^*$ and $\bL_\varepsilon =
  [\kappa_\varepsilon(\vect{x}_i,\vect{x}_j)]_{i,j}$ for $\kappa$ a stationary
  kernel of smoothness order $r$ and $\vect{x}_1, \ldots, \vect{x}_n$
  vectors in $\R^d$. Then, for all $m > \PP_{r-1,d}$, the limiting distribution
  of $\Xe \sim |DPP|_m(\bL(\varepsilon))$ is:
  \[ \Xs \sim  \mppDPP{m} \ELE{(-1)^r \bD^{(2r-1)}} {\bV_{< r}} \]
\end{corollary}
\begin{proof}
 Subcase of Theorem~\ref{thm:general-case-smooth-fixed-size}.
\end{proof}

With these two corollaries in hand, we can go back to the teaser (figure
\ref{fig:teaser}) we gave in the introduction. In figure
\ref{fig:teaser}, the points 1 to 6 are on a parabolic curve: $x_2 = x_1^2$,
while point 7 ($x_1 = 0.5,x_2 = 0.6$) is not. For now let $X = \{1,2,3,4,5,6 \}$
and $X' = \{2,3,4,5,6,7 \}$. 
Applying corollary
\ref{cor:nd-smooth-magic-case} for a  $\mDPP{6}$ with a Gaussian kernel, we see
that $p(\Xs = X) \propto \det V_{\leq 2}(X)^2 = 0$
(the matrix is square and has two identical columns). On the other hand, one may check
numerically that $\det V_{\leq 2}(X')$ is non-zero, even though $X'$ is less
spread-out than $X$. 
For the case of the exponential kernel, we apply corollary
\ref{cor:nd-finite-smooth-magic-case}, and we can verify numerically that
$X$ is much more likely than $X'$.
In fact, the two corollaries tell us more: the case of the Gaussian kernel holds in
fact for all kernels with $r>1$, which all give zero probability to set $X$. The
more general phenomenon this illustrates is that DPPs defined from smooth
kernels avoid non-unisolvent\footnote{Basically, $N\geq  \PP_{m,d}$ points $x_1,\ldots,x_N$ are unisolvent for polynomials of degree $m$ if the zero polynomial is the only one vanishing on all of them.}  sets, even though they may be acceptably
spread-out.

\subsection{The general case.}
\label{sec:non-universal-limits}

Up to here, we have covered all the easy cases which lead to universal limits. To be precise, for a fixed $d\in\mathbb{N}^*$ and $r\in\mathbb{N}^*$:
\begin{itemize}
	\item Corollary~\ref{cor:nd-finite-smooth-magic-case} covers the case $m> \PP_{r-1,d}$
	\item Out of the remaining cases where $m\leq \PP_{r-1,d}$, corollary~\ref{cor:nd-smooth-magic-case} covers the special cases where $m\in\magicn{d}$: $m=\PP_{0,d}$, $m=\PP_{1,d}$, $\ldots$, $m=\PP_{r-1,d}$.
\end{itemize}
 What remains is to cover the not-so-easy cases where $m\leq \PP_{r-1,d}$ and
 $m\notin\magicn{d}$. The statement of the results involves derivatives of the kernel.
A convenient short-hand notation for higher-order derivatives uses
multi-indices:
\[
  f^{(\va)}
  (\vect{x}) =\frac{\partial f^{|\va|}}{\partial x_1^{\alpha_1} \cdots\partial x_d^{\alpha_d}} (\vect{x})
\]
The Wronskian matrix of the kernel is defined as:
\begin{equation}\label{eq:wronskian_nd}
  \matr{W}_{\leq k}  = 
  \left[
    \frac{k^{(\va,\vect{\beta})} (\vect{0},\vect{0})}{\va!\vect{\beta}!}
  \right]_{|\va| \leq k, |\vb| \leq k} \in\mathbb{R}^{\PP_{k,d}\times \PP_{k,d}}.
\end{equation}
Here we index the matrix using multi-indices (equivalently, monomials), so that an element of
$\matr{W}_{\leq k}$ is e.g., $\matr{W}_{(0,2),(2,1)}$ which is a scaled
derivative of $k(\vect{x},\vect{y})$ of order $(0,2)$ in $\vect{x}$ and $(2,1)$
in $\vect{y}$.
For example, for $d=2$ and $k=2$ we may write 
\[
\matr{W}_{\leq 2} =\begin{bmatrix}
k^{((0,0),(0,0))} & k^{((0,0),(1,0))} & k^{((0,0),(0,1))} & \frac{k^{((0,0),(2,0))}}{2} & {k^{((0,0),(1,1))}} & \frac{k^{((0,0),(0,2))}}{2} \\
k^{((1,0),(0,0))} & k^{((1,0),(1,0))} & k^{((1,0),(0,1))} & \frac{k^{((1,0),(2,0))}}{2} & {k^{((1,0),(1,1))}} & \frac{k^{((1,0),(0,2))}}{2} \\
k^{((0,1),(0,0))} & k^{((0,1),(1,0))} & k^{((0,1),(0,1))} & \frac{k^{((0,1),(2,0))}}{2} & {k^{((0,1),(1,1))}} & \frac{k^{((0,1),(0,2))}}{2} \\
\frac{k^{((2,0),(0,0))}}{2} & \frac{k^{((2,0),(1,0))}}{2} & \frac{k^{((2,0),(0,1))}}{2} & \frac{k^{((2,0),(2,0))}}{4} & {\frac{k^{((2,0),(1,1))}}{2}} & \frac{k^{((2,0),(0,2))}}{4} \\
k^{((1,1),(0,0))} & k^{((1,1),(1,0))} & k^{((1,1),(0,1))} & \frac{k^{((1,1),(2,0))}}{2} & {k^{((1,1),(1,1))}} & \frac{k^{((1,1),(0,2))}}{2} \\
\frac{k^{((0,2),(0,0))}}{2} & \frac{k^{((0,2),(1,0))}}{2} & \frac{k^{((0,2),(0,1))}}{2} & \frac{k^{((0,2),(2,0))}}{4} & {\frac{k^{((0,2),(1,1))}}{2}} & \frac{k^{((0,2),(0,2))}}{4} \\
\end{bmatrix}\in\mathbb{R}^{\PP_{2,2}\times \PP_{2,2}}
\]
for a given ordering of the monomials, and where all the derivatives are taken
at $\vect{x}=0,\vect{y}=0$.

\begin{corollary}
  \label{cor:general-case-smooth-fixed-size}
  Let $d\in \mathbb{N}^*$ and $\bL_\varepsilon =
  [\kappa_\varepsilon(\vect{x}_i,\vect{x}_j)]_{i,j}$ for $\kappa$ a stationary
  kernel of smoothness order $r$, and $\vect{x}_1, \ldots, \vect{x}_n$
  vectors in $\R^d$. Let $m< \PP_{r-1,d}$ and $k\leq r-1$ the integer such
  that $\PP_{k-1,d} < m <  \PP_{k,d}$. Let us partition the Wronskian
  $\bW_{< k}$ as: 
  \[
  \matr{W}_{< k} =  \begin{bmatrix}\matr{W}_{< k-1} & \matr{W}_{\left\urcorner\right.} \\ \matr{W}_{\llcorner}  &\matr{W}_{\lrcorner}  \end{bmatrix} .
  \]
  Then, the limiting distribution of $\Xe \sim |DPP|_m(\bL_\varepsilon)$ is:
  \[\Xs \sim \mppDPP{m} \ELE{\bV_k\bar{\bW}\bV_k^\top}{\bV_{< k}}\]
  where $\bar{\bW}\in\mathbb{R}^{\HH_{k,d}\times \HH_{k,d}}$ is the  Schur complement:
  \[ \bar{\bW}= \matr{W}_{\lrcorner} - \matr{W}_{\llcorner}
    (\matr{W}_{< k - 1})^{-1}\matr{W}_{\urcorner} \]
\end{corollary}
\begin{proof}
	Subcase of Theorem~\ref{thm:general-case-smooth-fixed-size}.
\end{proof}

\subsection{Numerical illustrations}
\label{sec:numerics-nd}

We show here some numerical results analoguous to those of section~
\ref{sec:numerics-1d}.  In figures \ref{fig:cond-dens-2d-exp} and
\ref{fig:cond-dens-2d-r2}, we show the convergence of conditional densities for
two different kernels.  We illustrate the conditional probabilities of $\vect{x} \cup Y  \large| Y $ where $Y$ comprises seven points already sampled. Even if the ground set is finite and for the sake of illustration,  $\vect{x}$ varies continuously in the unit square. Figure \ref{fig:convergence-inclusion-2d} shows the convergence of inclusion probabilities in an example.

\begin{figure}[h]
	\centering
	\includegraphics[width=12cm]{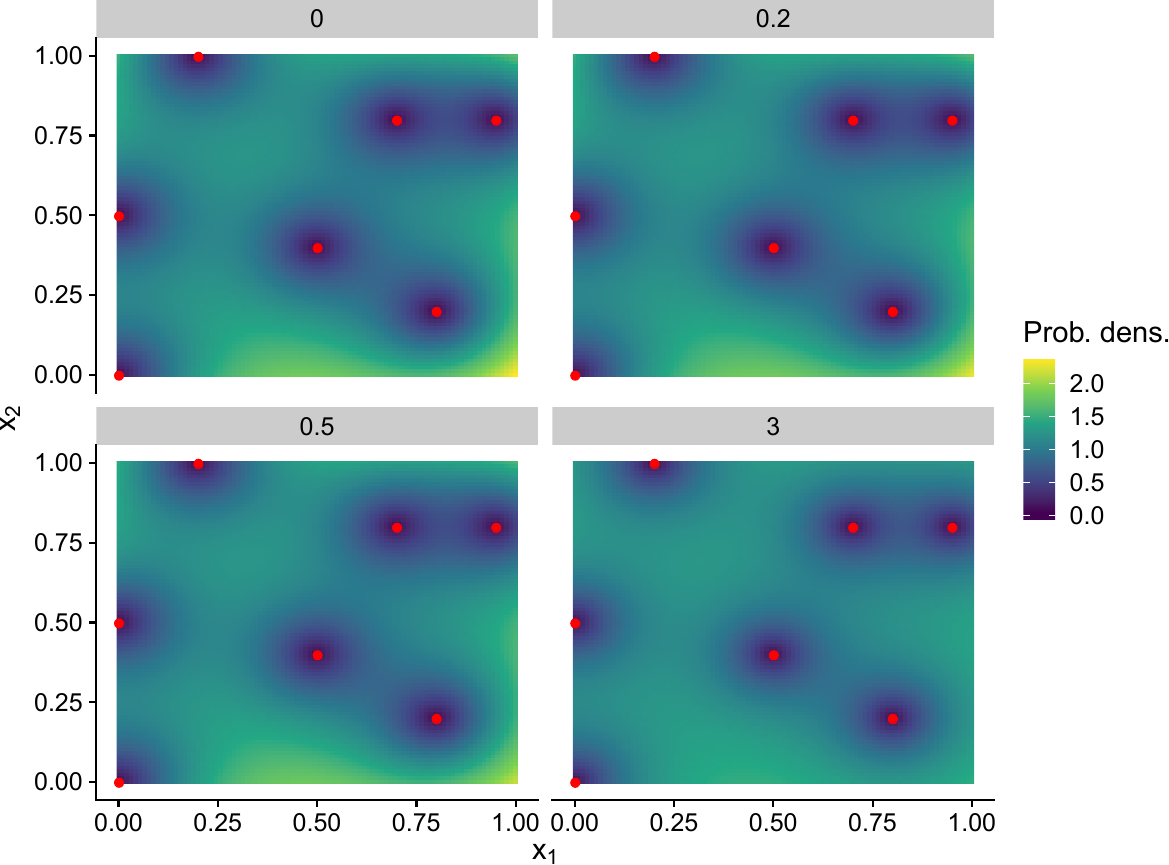}
	\caption{Conditional probability density for $\vect{x}\in [0,1]^2$ conditional on the 7
		nodes in red, for the exponential kernel $\exp(-\norm{\vect{x} -
			\vect{y}})$. The four panels represent the density for different values of
		$\varepsilon$ (panels are labelled with the value). The top-left panel is the theoretical limit.
	}
	\label{fig:cond-dens-2d-exp}
\end{figure}

\begin{figure}[h]
	\centering

	\includegraphics[width=12cm]{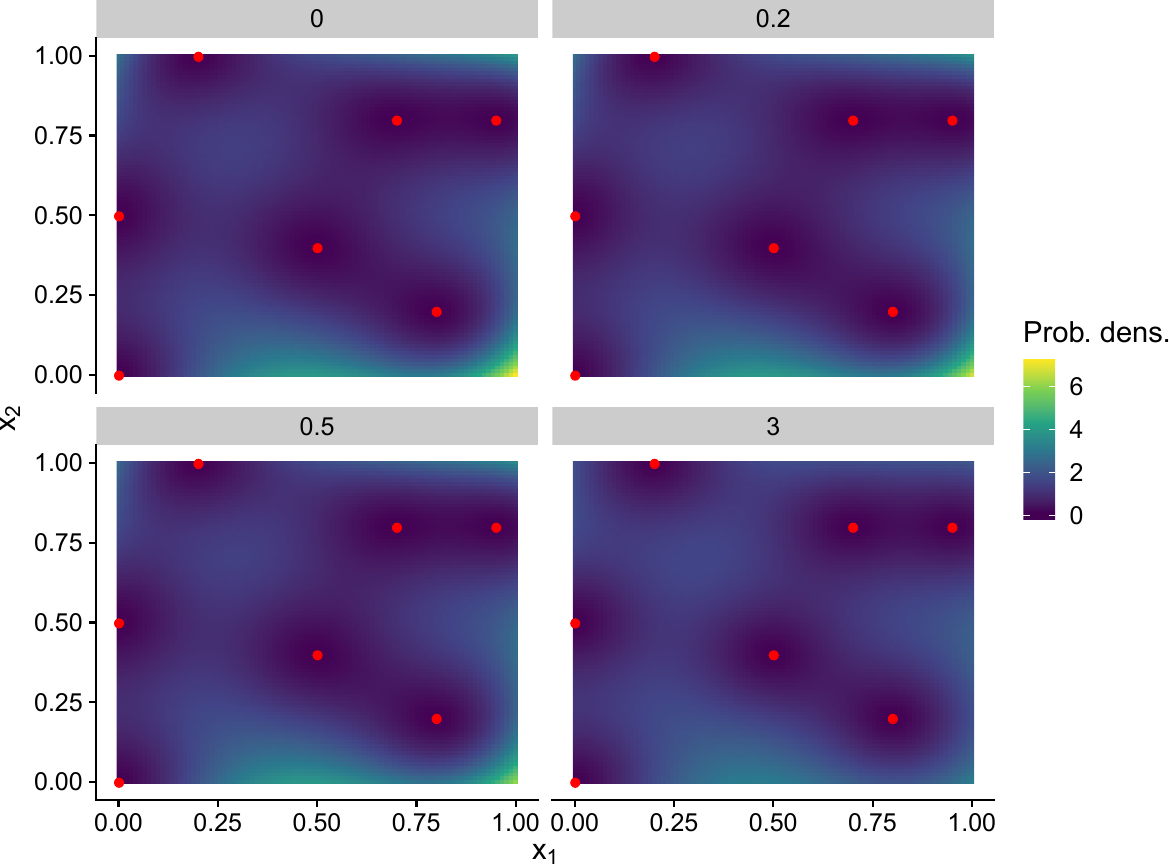}
	\caption{Same as in figure \ref{fig:cond-dens-2d-exp}, but for the kernel $(1+\norm{\vect{x} - \vect{y}})\exp(-\norm{\vect{x} - \vect{y}})$
	}
	\label{fig:cond-dens-2d-r2}
\end{figure}

\begin{figure}[h]
		\includegraphics[width=12cm]{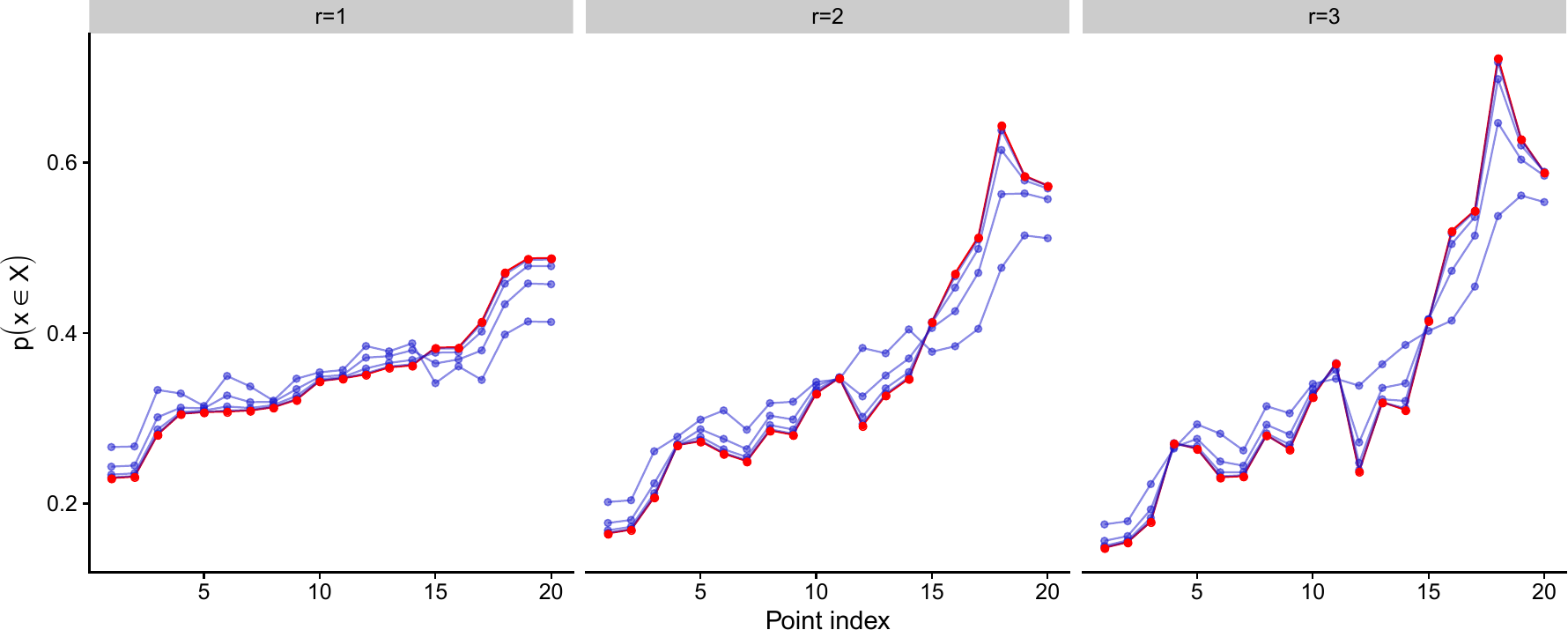}
	\caption{Flat limit of inclusion probabilities of (fixed-size) L-ensembles for three different
		kernels, multivariate case. Here we plot $\Proba(x \in \Xe)$, the inclusion probabilities for a fixed size L-ensemble 
		(with $m=7$), where the ground set $\Omega$ consists in 20 points drawn at
		random from the unit square. To better visualise the convergence, we plot
		$\Proba(x_i \in \Xe)$ as a function of the index $i$, and we have ordered the
		points according to their inclusion probability for the first kernel.
		Everything else is analoguous to fig. \ref{fig:convergence-inclusion-1d}. 
		The dots in blue (joined by lines for
		clarity) are inclusion probabilities for 
		$\varepsilon = 4,1.5,.5,.1$. The dots in red represent
		the limit in $\flatlim$. The three kernels are, from left-to-right,
		$\exp(-\delta), (1+\delta)\exp(-\delta),(3+3\delta+\delta^2)\exp(-\delta)$,
		where $\delta = \norm{\vect{x} - \vect{y}}$. These kernels have $r=1$,$2$ and $3$, respectively.}
	\label{fig:convergence-inclusion-2d}
\end{figure}

\section{The flat limit of fixed-size and varying-size L-ensembles: two general theorems}
\label{sec:two_general_theorems}
\subsection{The fixed-size case}
This is the general theorem that was thoroughly discussed in the form of several corollaries in the last two sections.
\begin{theorem}
	\label{thm:general-case-smooth-fixed-size}
	Let $d\in \mathbb{N}^*$ and $\Omega=\{\vect{x}_1,\ldots,\vect{x}_n\}$ a set of $n$
	distinct points in $\R^d$. Let $\bL_\varepsilon =
	[\kappa_\varepsilon(\vect{x}_i,\vect{x}_j)]_{i,j}$ for $\kappa$ a stationary
	kernel of smoothness order $r$. Let the integer $m\leq n$ be the number of desired samples. There are three possible scenarios depending on the value of $m$:
	\begin{enumerate}
		\item for all $m \leq \PP_{r-1,d}~$ verifying $m\in\magicn{d}$, \emph{i.e.}, for all values of $m$ for which there exists an integer $k\leq r-1$ such that $m=\PP_{k,d}$, the fixed-size L-ensemble $\X_\varepsilon \sim
		|DPP|_m(\bL_\varepsilon)$ has the limiting distribution, as $\flatlim$:
		\[ \Xs \sim |DPP|_m(\bV_{\leq k}\bV_{\leq k}^\top)\]
		Equivalently, if $\bQ\in\mathbb{R}^{n\times \PP_{k,d}}$ is an orthonormal basis for $\bV_{\leq k}$, then:
		\begin{align}
		\label{eq:with_Q}
		\Xs \sim |DPP|_m(\bQ\bQ^\top)
		\end{align}
		\item for all $m \leq \PP_{r-1,d}~$ verifying $m\notin\magicn{d}$, \emph{i.e.}, for all values of $m$ for which there does not exist $k\leq r-1$ such that $m=\PP_{k,d}$, the following is verified. Denote by $k\leq r-1$ the integer such
		that $\PP_{k-1,d} < m <  \PP_{k,d}$. Let us partition the Wronskian
		$\bW_{< k}$ as: 
		\[
		\matr{W}_{< k} =  \begin{bmatrix}\matr{W}_{< k-1} & \matr{W}_{\left\urcorner\right.} \\ \matr{W}_{\llcorner}  &\matr{W}_{\lrcorner}  \end{bmatrix} .
		\]
		Then, the limiting distribution of $\Xe \sim |DPP|_m(\bL_\varepsilon)$ is:
		\[\Xs \sim \mppDPP{m} \ELE{\bV_k\bar{\bW}\bV_k^\top}{\bV_{< k}}\]
		where $\bar{\bW}\in\mathbb{R}^{\HH_{k,d}\times \HH_{k,d}}$ is the  Schur complement:
		\[ \bar{\bW}= \matr{W}_{\lrcorner} - \matr{W}_{\llcorner}
		(\matr{W}_{< k - 1})^{-1}\matr{W}_{\urcorner} \]
		\item for all $m > \PP_{r-1,d}$, the limiting distribution
		of $\Xe \sim |DPP|_m(\bL_\varepsilon)$ is:
		\[ \Xs \sim  \mppDPP{m} \ELE{(-1)^r \bD^{(2r-1)}} {\bV_{< r}} \]
	\end{enumerate}
\end{theorem}
\begin{proof}
	In the following, $\bL_{\varepsilon,\X}$ stands for the matrix $\bL_{\varepsilon}$ reduced to its lines and columns indexed by $\X$. 
	Below is the proof for the univariate case. The proof for the multivariate
case is in the same spirit, only more complicated: it can be found in Appendix\ref{appA}.\\
	
	In dimension 1, note that $\forall k, \PP_{k-1,d=1}=k$ such that all integers smaller than $\PP_{r-1,1}$ are in $\magicn{1}$: the second scenario never happens and only two scenarios exist: 1.~$m\leq r$ or 2.~$m> r$. We prove the results of both scenarios separately:
	\begin{enumerate}
		\item ($m \leq \PP_{r-1,1}=r$). Let $X$ be a set of $m$ points. As $m\leq r$, one can show that the determinant of $\bL_{\varepsilon,X}$ has the  expansion
		\begin{equation}		
		\det (\bL_{\varepsilon,X}) = \varepsilon^{m(m-1)} (\det(\matr{V}_{< m}(X))^2 \det \matr{W}_{< m} + \O(\varepsilon)).\label{eq:det_1d_smooth}
		\end{equation}
		where we have made explicit in the notation the quantities that depend on the points
		$X$ versus those that do not. 
		This result originally appeared in \cite{lee2015flatkernel}, and can be found in this form in theorem 4.1 of~\cite{BarthelmeUsevich:KernelsFlatLimit}. 
		Now, let $\bL^\star = \bV_{< m}\bV_{< m}^\top$. The previous expansion implies:
		\[ \Proba(\X_\varepsilon = X) =  \frac{  \varepsilon^{m(m-1)} \left( \det
			\matr{W}_{< m}  \det
			\bL^\star_{X} + \O(\varepsilon)  \right) }{ \varepsilon^{m(m-1)} \left(
			\det \matr{W}_{< m} \sum_{Y,|Y|=m} \det
			\bL^\star_{Y} + \O(\varepsilon)  \right)}
		\]
		We  may apply lemma \ref{lem:TV-convergence} directly: $\X_\varepsilon$ tends to
		$\X_\star$, a fixed-size DPP with law: 
		\[ \Proba(\X_\star = X) =  \frac{\det
			\bL^\star_{X} }{\sum_{Y,|Y|=m} \det
			\bL^\star_{Y} } \]
		
		\item ($m > \PP_{r-1,1}=r$). In this case, theorem 4.4 of~\cite{BarthelmeUsevich:KernelsFlatLimit} for kernels with lower order of smoothness yields the following expansion for the determinant of $\bL_{\varepsilon,X}$:
		\begin{equation}
		\det (\bL_{\varepsilon,X}) =\varepsilon^{m(2r-1)-r^2} \left( \widetilde{l}(X) + \O(\varepsilon)\right),
		\label{eq:det_finite_smoothness}
		\end{equation}
		where the main term is given by
		\begin{equation}
		\label{eq:ltilde}
		\widetilde{l}(X) =
		(-1)^{r} 
		\det \matr{W}_{< r}  \det
		\begin{bmatrix}
		f_{2r-1}  \bD^{(2r-1)}(X) &  \matr{V}_{< r}(X) \\
		\matr{V}_{< r}(X) ^\top& 0 
		\end{bmatrix}\\
		\end{equation}
		Using lemma \ref{lem:TV-convergence} as for the first scenario, one obtains the result.
	\end{enumerate}
\end{proof}

\subsection{The varying-size case}
The varying-size case is more involved, mainly due to the extra scaling
parameters that are required in order to obtain non-trivial limits. 
To see why rescaling is needed, consider the one-dimensional case and $r=1$
(e.g., the exponential kernel in $d=1$). If we set $\Xe \sim
DPP(\bL_\varepsilon)$, the size of $\Xe$ will be $\leq 1$ with probability 1 in
the limit. This follows from the scaling behaviour in $\varepsilon$ of the
determinants. In an L-ensemble,
\[ \Proba(|\Xe| = k) = \frac{\sum_{|X| = k} \det \bL_{\varepsilon, X}}{\sum_{X} \det
    \bL_{\varepsilon,X}} = \frac{e_k(\bL_\varepsilon)}{\det(\bI+\bL_\varepsilon)} \]
The results in \cite{BarthelmeUsevich:KernelsFlatLimit} imply that if $r=d=1$,
$e_1(\bL_\varepsilon)$ scales as $\O(1)$, while $e_k(\bL_\varepsilon)$ scales as
$\O(\varepsilon^{k-1})$ for $k>1$. This means in turn that
the probability of sampling a set of size $>1$ is vanishingly small in
$\varepsilon$. To have a reasonable limit, we must rescale the L-ensemble as
$\flatlim$. For instance, in this case, it is enough to take $\Xe \sim
DPP(\alpha \varepsilon^{-1} \bL_\varepsilon)$ for some $\alpha > 0$. A quick
calculation shows that with this rescaling the size in the limit is $\geq 1$
with probability 1, with the expected size depending on $\alpha$ and the
spectrum of $\bL$. The theorem below shows that different scalings lead to
different sizes in the limits. Interestingly, some scalings lead to (universal)
projection DPPs and others to (non-universal) partial projection DPPs. 

Another way to think about the rescaling introduced in the theorem below, is
that it is equivalent to taking the limit in $\flatlim$ while holding $\E |\Xe|
= m$
constant. Depending on the value of $m$, different scaling orders are required. This can be proved formally by appealing to the Newton-Puiseux
theorem, but we skip the argument here because it requires additional background.

\begin{theorem}
	\label{thm:Xe_varying_multivariate}
	Let $d\in\mathbb{N}^*$, $p\in\mathbb{N}$, $\alpha>0$, and $\Omega=\{\vect{x}_1,\ldots,\vect{x}_n\}$ a set of $n$
	distinct points in $\R^d$. Let $\bL_\varepsilon = [\kappa_\varepsilon(\vect{x}_i,\vect{x}_j)]_{i,j}$ with $\kappa$ a
	stationary kernel of smoothness order $r\in\mathbb{N}^*$.  Let $\Xe \sim
	DPP(\alpha\varepsilon^{-p}\bL_\varepsilon)$.  In the limit $\flatlim$, the
	distribution of $\Xe$ depends on the interplay between $p, r$ and $n$. First of all, $p$ is either even or odd: only one out of the two following values $\left(\frac{p}{2},\frac{p+1}{2}\right)$ is an integer. We call that integer $l$. Now, if $\PP_{l-1,d}\geq n$  then, for any value of $r$, $\Xe$ has limit $\Omega$ with probability one. 
	Otherwise, there are three scenarii depending on the value of $r$: 
	\begin{enumerate}
		\item if $r<\frac{p+1}{2}$, then $\Xe$ has limit $\Xs=\Omega$ with probability one.
		\item if $r>\frac{p+1}{2}$, $\Xe$ has a limiting distribution that depends on the parity of $p$:
		\begin{enumerate}
			\item If $p$ is odd ($l=\frac{p+1}{2}$), then $\Xe$ has limit $\Xs \sim \mDPP{\PP_{l-1,d}}(\bV_{< l}\bV_{< l}^\top)$
			\item If $p$ is even ($l=\frac{p}{2}$) then $\Xe$ has limit $\Xs \sim \ppDPP \ELE{\alpha \bV_{l}\bar{\bW}\bV_{l}^\top}{\bV_{<
					l}}$ with $\bV_{l}\bar{\bW}\bV_{l}^\top$ as in theorem \ref{thm:general-case-smooth-fixed-size}.
		\end{enumerate}
		\item if $r=\frac{p+1}{2}$, then $\Xe$ has limit $\Xs \sim DPP \ELE{\alpha f_{2r-1}
			\bD^{(2r-1)}}{\bV_{<  r}}$.
	\end{enumerate}
\end{theorem}

\begin{remark}
  The case $r>\frac{p+1}{2}$, $p$ odd is a universal limit, and a fixed-size
  DPP. For instance, if $r>1$, and $p=3$, then we obtain the same limiting DPP
  regardless of the specific kernel, and it has a fixed size equal to the number
  of monomials of degree $\leq 2$ in dimension $d$.
  Another almost-universal limit is given by the last case in the theorem
  where all kernels lead to the same limit up to rescaling by $\alpha$. If the
  kernel has $r=3$, for instance, we may take $p=2r-1=5$ to obtain this limit. 
\end{remark}

\begin{proof}
	See Appendix~\ref{appB}.
\end{proof}

\section{Practical consequences}
\label{sec:practical-consequences}

In \cite{paper1}, we suggested using a certain family of DPPs as a good
``default'' for sampling with repulsion. The family is defined by an extended
L-ensemble $\ELE{\bL}{\bV}$ with kernel
\begin{equation}
  \label{eq:default-kernel}
\bL =  \left[ \gamma (-1)^{\lceil \beta/2 \rceil} \norm{\vect{x}-\vect{y}}^\beta \right]_{\vect{x} \in
  \Omega,\vect{y} \in \Omega}
\end{equation}
and $\bV = \bV_{\leq \lceil \beta/2
  \rceil - 1}$ a multivariate Vandermonde matrix for monomials of degree $\leq
\lceil \beta/2 \rceil - 1$. There are only two hyperparameters: $ \beta
\not\in 2\mathbb{N}$, which controls the amount of repulsion, and $\gamma$,
which controls the expected size. This particular family of DPPs can be
motivated by appealing to the theory of conditional positive definite kernels,
but another justification follows from Theorem
\ref{thm:Xe_varying_multivariate}: it is also the process obtained by taking the
flat limit of finitely-smooth L-ensembles, for instance the Matèrn family of
kernels. Different choices of $\beta$ correspond to different values of the
smoothness order $r$ \footnote{Formally, this is valid for $\beta$ integer
  and odd, and when using the scaling $\varepsilon^{2r-1}\bL$}. 

We expect this aspect of our results to be useful to practitioners. When using
the Matèrn family of kernels to define L-ensembles, one would have to deal with
three hyperparameters; one for smoothness order ($r$), one for spatial
length-scale $\varepsilon$, and one for expected size ($\gamma$). How repulsive
the process is depends on both $\varepsilon$ and $r$ in a complicated and
unpredictable manner. Eliminating $\varepsilon$ by taking the flat limit yields
a repulsive point process with only two hyperparameters. 

When a DPP is defined instead using the Gaussian kernel, there are only two
hyperparameters, one for spatial scale and one for expected size. The spatial
scale controls the repulsiveness. By theorem \ref{thm:Xe_varying_multivariate},
eliminating $\varepsilon$ through the flat limit yields a DPP that is still
repulsive, but based on global basis functions (the discrete orthogonal
polynomials of $\Omega$). There is a single hyperparameter left, the size.
Because of the particular structure induced by the set of basis functions, there
are natural choices for setting $\E | \X| = p$ in dimension $d$. If one picks
one of the ``magic'' numbers for $p$, then the flat limit is a projection DPP.
In dimension 2 for example, there are 6 monomials of degree $\leq 2$, and 10 of
degree $\leq 3$. If we pick $p=10$ then a projection DPP may be defined using
the basis functions $\bV_{\leq 3}$, of which there are ten. Taking $p=11$
implies taking into account just part of the fourth-degree monomials. There is
no canonical way to do this, and this is reflected in Theorem
\ref{thm:Xe_varying_multivariate}, which shows the flat limit is non-universal
in this case, and involves the Wronskian of the kernel. The magic numbers are
therefore ``natural'' choices for the sample size of a DPP in dimension $d$. 
They correspond to the (projection) DPPs $\mDPP{k+d \choose d}(\bV_{\leq
  k}\bV_{\leq k}^\top)$ for different values of $k$, called Vandermonde DPPs in
\cite{tremblay2019determinantal} and orthogonal polynomial ensembles in
\cite{bardenet2020monte}. 

To sum up, eliminating spatial scale removes a hyperparameter but results in a
sensible limit. If one starts from a family where $r$ is free, one is left with
two hyperparameters; if one starts from a family where $r=\infty$, only one
hyperparameter is left. Finally, notice that the DPPs $\mDPP{k+d \choose
  d}(\bV_{\leq   k}\bV_{\leq k}^\top)$ are a special case of the extended
L-ensembles defined by eq. \eqref{eq:default-kernel}, obtained by taking
$\gamma=0$, so that $\E |\X|$ is then determined by $\beta$. This suggests that
global interaction is determined by the polynomial basis functions, while the
kernel in eq. \eqref{eq:default-kernel} governs mostly local interactions (as
remark \ref{rem:exp-kernel-d1} hints when $d=1$). We believe that this may point
to computational savings, but we leave this for future work.

\section{To conclude}
\label{sec:discussion}

In the flat limit, L-ensembles formed from stationary kernels stay well-defined (and
meaningfully repulsive). In some cases we obtain universal limits where the
limit process depends only on $r$ and not the Wronskian of the kernel. In
dimension $d > 1$, these universal limits are obtained for certain natural
values of $m$ (for fixed-size L-ensembles) or when rescaling with $\varepsilon^{-p}$
for $p$ odd (varying-size L-ensembles).

The question of how fast L-ensembles converge to the limits given here requires
expansions to the next order, which we do not yet have. Empirically, we observe
that convergence is quite fast in the fixed-size case, but slower in the
varying-size case, at least in some instances. This means that the distribution
of the size of $\Xe$ may converge slowly to its limit. We hope to investigate
this further in future work.

In the interests of space we have left some topics aside. Our results on the
flat limit should apply as well to D-optimal design, and there is an interesting
connection to polyharmonic splines for kernels
with finite $r$ (see \cite{song2012multivariate,fanuel2020determinantal}). We have also entirely skipped
the topic of computational applications of these results. Finally, the
univariate results point to possible connections with random matrix theory we
have yet to explore.

Directions for future work include extending the results to continuous DPPs, and
in a related vein letting $n \rightarrow \infty$ as $\flatlim$ in discrete DPPs.
This should let one take advantage of some results from the literature on the
asymptotics of Christoffel functions, as in \cite{tremblay2019determinantal}. It
would also be worth investigating the flat limit on Riemannian manifolds, rather
than on $\R^d$ as we do here.

\begin{acks}[Acknowledgments]
	We thank Guillaume Gautier for helpful comments on preliminary versions of this
	manuscript.
\end{acks}

\begin{funding}
	This work was supported by the ANR projects GenGP (ANR-16-CE23-0008), GRANOLA (ANR-21-CE48-0009), and 
	LeaFleT (ANR-19-CE23-0021-01), as well as the LabEx PERSYVAL-Lab (ANR-11-LABX-0025-01),
	the Grenoble Data Institute (ANR-15-IDEX- 02), MIAI@Grenoble Alpes
	(ANR-19-P3IA-0003), the LIA CNRS/Melbourne Univ Geodesic, and the IRS (Initiatives de Recherche Stratégiques) of the IDEX Université Grenoble Alpes. 
\end{funding}
  
\begin{appendix}
 
  \section{Proof of Theorem 5.1 in the Multivariate Case}
  \label{appA}
  We prove all three scenarios separately.
  \begin{proof}
  	\begin{enumerate}
  		\item ($m \leq \PP_{r-1,d}$ and $m\in \magicn{d}$) Case 1 of theorem 6.1 in \cite{BarthelmeUsevich:KernelsFlatLimit} states the behavior in $\varepsilon$ of the determinant in this case:
  		\begin{align*}
  		\forall X \text{ s.t. } | X|=m,\qquad \det(\bL_{\varepsilon,X}) = \varepsilon^{M} \left(\det \bW_{\leq k}(\det\bV_{\leq k}(X))^2 + \O(\varepsilon)\right) 
  		\end{align*} 
  		for some $M\in\mathbb{N}$ that we do not need to specify in this proof. The Wronskian matrix $\bW_{\leq
  			k}$ is irrelevant here as it does
  		not depend on $X$. Similarly to the univariate proof, one obtains that the limiting distribution is indeed $\Xs \sim |DPP|_m(\bV_{\leq k}\bV_{\leq k}^\top)$.
  		The equivalence with the formulation of Eq.~\eqref{eq:with_Q} comes
  		from applying Lemma~\ref{lem:max-rank-dpp} as $\bV_{\leq k}\bV_{\leq k}^\top$ is of rank $m$.
  		\item ($m \leq \PP_{r-1,d}$ and $m\notin \magicn{d}$) This is the most involved case. Let $X \subset\Omega$ be a subset of size $m$. 
  		Case 2 of theorem 6.1 in \cite{BarthelmeUsevich:KernelsFlatLimit} states the behavior in $\varepsilon$ of the determinant in this case:
  		\begin{equation}
  		\label{eq:limiting-det-general-case}
  		\det{\matr{L}_{\varepsilon,X}} = \varepsilon^{2 s(k,d)}  (\det(\matr{Y}\matr{W}_{\leq k} \matr{Y}^{\T}) \det(\matr{V}_{< k}(X)^{\T} \matr{V}_{< k}(X)) + \O(\varepsilon))
  		\end{equation}
  		with $s(k,d) = d {k+d \choose d+1} - k (\PP_{k,d} - m)$ and $\matr{Y} \in \RR^{m \times \PP_{k,d}}$ defined as:
  		\begin{align}
  		\label{eq:def_Y}
  		\matr{Y} = 
  		\begin{bmatrix}
  		\matr{I}_{\PP_{k-1,d}} &  \\ & \Qort(X)^{\T} \matr{V}_k(X)
  		\end{bmatrix},
  		\end{align}
  		$\matr{I}_{\PP_{k-1,d}}$ being the identity matrix of dimension $\PP_{k-1,d}$, $\Qort(X)\in\mathbb{R}^{m\times (m-\PP_{k-1,d})}$ is an orthonormal basis for the space orthogonal to $\text{span } \matr{V}_{< k}(X)$. 
  		
  		Expanding the expression, one obtains
  		\[
  		\det(\matr{Y}\matr{W}_{\leq k} \matr{Y}^{\T}) =
  		\det
  		\left(\begin{array}{c|c}
  		\matr{W}_{< k} & \matr{W}_{\urcorner} \bV_k(X)^\top \Qort(X) \\ \hline
  		\Qort(X)^\top \bV_k(X)  \matr{W}_{\llcorner} & \Qort(X)^\top \bV_k(X)  \matr{W}_{\lrcorner}
  		\bV_k(X)^\top \Qort(X)
  		\end{array}\right).
  		\]
  		Applying lemma \ref{lem:block-det}, $\det(\matr{Y}\matr{W}_{\leq k} \matr{Y}^{\T})$ may be written:
  		\begin{align*}
  		&\det(\matr{W}_{< k})\det\left( \Qort(X)^\top \bV_k(X) \left( \matr{W}_{\urcorner}
  		- \matr{W}_{\llcorner}\matr{W}_{< k}^{-1}\matr{W}_{\urcorner} \right)\bV_k(X)^\top \Qort(X) \right) \\
  		&= \det(\matr{W}_{< k})\det\left( \Qort(X)^\top \bV_k(X) \bar{\bW} \bV_k(X)^\top \Qort(X) \right).
  		\end{align*}
  		Injecting into \eqref{eq:limiting-det-general-case} and applying lemma
  		\ref{lem:det-saddlepoint}, we obtain:
  		\[ \det{\matr{L}_{\varepsilon,X}} = \varepsilon^{2s(k,d)}
  		\left( \det(\matr{W}_{< k})\det
  		\begin{pmatrix}
  		\bV_k(X) \bar{\bW} \bV_k(X)^\top & \bV_{< k}(X) \\
  		\bV_{< k}(X)^\top  & \matr{0}
  		\end{pmatrix} + \O(\varepsilon)
  		\right).
  		\]
  		Applying lemma \ref{lem:TV-convergence} finishes the proof.
  		\item ($m > \PP_{r-1,d}$) Case 1 of theorem 6.3 in \cite{BarthelmeUsevich:KernelsFlatLimit} states the behavior in $\varepsilon$ of the determinant in this case:
  		\begin{align*}
  		\forall X \text{ s.t. } | X|=m\geq\PP_{r-1,d},\qquad \det (\matr{L}_{\varepsilon,  X}) =\varepsilon^{M} \left( \widetilde{l}( X) + \O(\varepsilon)\right),
  		\end{align*} 
  		with $\widetilde{l}(X)$ as in Eq.~\ref{eq:ltilde} (with
  		$\bD^{(2r-1)}(X)$, $\bW_{< r}$ and  $\matr{V}_{< r}(X)$ replaced
  		by their multivariate equivalent), and $M$ an integer. Similarly to the univariate proof, one obtains that the limiting distribution is indeed $\Xs \sim  \mppDPP{m} \ELE{(-1)^r\bD^{(2r-1)}} {\bV_{< r}}$.
  	\end{enumerate}
  \end{proof}
  
  \section{Proof of Theorem 5.2 }
  \label{appB}
  
  The proof of this theorem is quite involved and starts by showing a technical
  lemma on the limiting distribution of the \textit{size} of $\Xe$. It generalises
  the example we gave in the introduction to this section, to determine the
  expected size  $\E |\X|$ as a function of $r$, $n$, $d$ and the parameters of
  the scaling. 
  \subsection{A technical lemma}
  \begin{lemma}
  	\label{lem:size_Xe_varying_multivariate}
  	Let $d\in\mathbb{N}^*$, $p\in\mathbb{N}$, $\alpha>0$, and $\Omega=\{\vect{x}_1,\ldots,\vect{x}_n\}$ a set of $n$
  	distinct points in $\R^d$. Let $\bL_\varepsilon = [\kappa_\varepsilon(\vect{x}_i,\vect{x}_j)]_{i,j}$ with $\kappa$ a
  	stationary kernel of smoothness order $r\in\mathbb{N}^*$.  Let $\Xe \sim
  	DPP(\alpha\varepsilon^{-p}\bL_\varepsilon)$.  In the limit $\flatlim$, the distribution of the size of $\Xe$ depends on the interplay between $p, r$ and $n$. First of all, $p$ is either even or odd: only one out of the two following values $\left(\frac{p}{2},\frac{p+1}{2}\right)$ is an integer. We call that integer $l$. Now, if $\PP_{l-1,d}\geq n$  then, for any value of $r$, as $\flatlim$, $|\Xe|=n$ with probability one. 
  	Otherwise, there are three scenarii depending on the value of $r$: 
  	\begin{enumerate}
  		\item if $r<\frac{p+1}{2}$, then, as $\flatlim$, $|\Xe|=n$ with probability one.
  		\item if $r>\frac{p+1}{2}$, the size of $\Xe$ has a distribution that depends on the parity of $p$:
  		\begin{enumerate}
  			\item If $p$ is odd ($l=\frac{p+1}{2}$), then, as $\flatlim$, $|\Xe|=\PP_{l-1,d}$ with probability one. 
  			\item If $p$ is even ($l=\frac{p}{2}$) then, as $\flatlim$, the distribution tends to:
  			\begin{align*}
  			\Proba(|\Xs| = m) =  \left\{ \begin{array}{ll} 
  			0 & \text{ if } m < \PP_{l-1,d} \text{ or } m > \PP_{l,d}\\ 
  			\frac{e_{m -
  					\PP_{l-1,d}}(\alpha \widetilde{\bV_{l}\bar{\bW}\bV_{l}^\top})}{\det\left(\matr{I}+\alpha\widetilde{\bV_l \bar{\bW} \bV_l^\top}\right)} & otherwise\end{array} \right. 
  			\end{align*}
  			where $\bV_{l}\bar{\bW}\bV_{l}^\top$ is as in theorem \ref{thm:general-case-smooth-fixed-size}, and $\widetilde{\bV_{l}\bar{\bW}\bV_{l}^\top}=(\bI-\bQ_l\bQ_l^\top) \bV_{l}\bar{\bW}\bV_{l}^\top(\bI-\bQ_l\bQ_l^\top)$, $\bQ_l$ being an orthonormal basis of ${\rm{span}}(\matr{V}_{< l})$.
  		\end{enumerate}
  		\item if $r=\frac{p+1}{2}$, then, as $\flatlim$, the distribution tends to:
  		\begin{align}
  		\label{distrib:size_Xe_r=_multivar}
  		\Proba(|\Xs| = m) =  \left\{ \begin{array}{ll} 
  		0 & \mbox{if }  m< \PP_{r-1,d} \mbox{ or } m>n\\ 
  		\frac{e_{m-\PP_{r-1,d}}\left(\alpha f_{2r-1}\widetilde{\bD^{(2r-1)}}\right)}{\det\left(\bI+\alpha f_{2r-1}\widetilde{\bD^{(2r-1)}}\right)} & \mbox{otherwise}
  		\end{array} \right. 
  		\end{align}
  		where $\widetilde{\bD^{(2r-1)}}=(\bI-\bQ_r\bQ_r^\top) \bD^{(2r-1)}(\bI-\bQ_r\bQ_r^\top)$, $\bQ_r$ being an orthonormal basis of ${\rm{span}}(\matr{V}_{< r})$.
  	\end{enumerate}
  \end{lemma}
  
  \begin{proof}
  	Recall that $\bL_{\varepsilon,\X}$ stands for the matrix $\bL_{\varepsilon}$ reduced to its lines and columns indexed by $\X$. 
  	First, recall that if $\X \sim DPP(\bL)$, then the marginal distribution of its size $|\X|$ is given by (see, \textit{e.g.}, Corollary 1.18 of~\cite{paper1})
  	\begin{equation}
  	\label{eq:marginal-distr-size_multivar}
  	\Proba(|\X| = m) =  \frac{e_m(\bL)}{e_0(\bL) + e_1(\bL) + \ldots + e_n(\bL)}.
  	\end{equation}
  	where $e_m(\bL)$ is the $m$-th elementary symmetric polynomial of $\bL$ and for
  	consistency $e_0(\bL)=1$ for all matrices $\bL$. Recall also that $e_i(\bL)$  verifies:
  	\begin{align*}
  	e_i(\bL)=\sum_{|\X|=i} \det \bL_{\X}.
  	\end{align*}
  	Here, we consider the $L$-ensemble $\alpha\varepsilon^{-p}\bL_\varepsilon$. 
  	One has $\det(\alpha\varepsilon^{-p} \bL_{\varepsilon,\X}) =
  	\alpha^{|\X|}\varepsilon^{-p|\X|}\det(\bL_{\varepsilon,\X})$, which yields $\forall i$: $ e_i(\alpha\varepsilon^{-p} \bL_\varepsilon) = \alpha^i \varepsilon^{-ip} e_i(\bL_\varepsilon)$. \\	
  	Let $r\in\mathbb{N}^*$, $d\in\mathbb{N}^*$ and consider $i\leq \PP_{r-1,d}$. In the flat limit, we can apply theorem 6.1 in~\cite{BarthelmeUsevich:KernelsFlatLimit}. There are two cases: either $i$ is a magic number ($i\in\magicn{d}$) in which case $k\in\mathbb{N}$ will denote the integer verifying $i=\PP_{k,d}$, or it is a muggle number ($i\notin\magicn{d}$) in which case $k\in\mathbb{N}$ denotes the smallest integer such that $i<\PP_{k,d}$. In both cases, we denote by $M(i)$ the integer $M(i)=d {k+d \choose d+1}$. Combining points 1 and 2 of Theorem 6.1 in~\cite{BarthelmeUsevich:KernelsFlatLimit}, one has, $\forall \; 1\leq i\leq \PP_{r-1,d}$:
  	\begin{align*}
  	e_i(\bL_\varepsilon) &= \sum_{|\X|=i} \det \bL_{\varepsilon,\X} \\
  	&= \varepsilon^{2\left(M(i)+k(\PP_{k,d}-i)\right)} \left( \sum_{|\X|=i} \det(\matr{Y}\matr{W}_{\leq k}\matr{Y}^\top) \det(\matr{V}_{< k}(\X)^\top \matr{V}_{< k}(\X)) + \O(\varepsilon) \right)
  	\end{align*}
  	where $\matr{Y}$ is as in Eq.~\eqref{eq:def_Y}. Let us define $\tilde{e}_i$ by 
  	\begin{align}
  	\label{eq:etilde_multivar}
  	\forall\; 1\leq i\leq \PP_{r-1,d}\qquad \tilde{e}_i&=\sum_{|\X|=i} \det(\matr{Y}\matr{W}_{\leq k}\matr{Y}^\top) \det(\matr{V}_{< k}(\X)^\top \matr{V}_{< k}(\X))
  	\end{align}
  	such that:
  	\begin{align*}
  	\forall\; 1\leq i\leq \PP_{r-1,d}\qquad e_i(\bL_\varepsilon)=  \varepsilon^{2\left(M(i)+k(\PP_{k,d}-i)\right)} \left( \tilde{e}_i + \O(\varepsilon)  \right).
  	\end{align*}
  	Also, we can apply theorem 6.3 of~\cite{BarthelmeUsevich:KernelsFlatLimit} for any set $\X$ of size $i> \PP_{r-1,d}$:
  	\begin{align*}
  	\forall i> \PP_{r-1,d}\qquad e_i(\bL_\varepsilon) &= \sum_{|\X|=i} \det \bL_{\varepsilon,\X} = \varepsilon^{2d  {r+d-1 \choose d+1} +(2r-1)(i-\PP_{r-1,d})} \left( \sum_{|\X|=i} \tilde{l}(\X) + \O(\varepsilon) \right)\\
  	&=\varepsilon^{2d  {r+d-1 \choose d+1} +(2r-1)(i-\PP_{r-1,d})} \left( \bar{e}_i + \O(\varepsilon) \right)
  	\end{align*}
  	where $\bar{e}_i$ verifies:
  	\begin{align}
  	\label{eq:ebar}
  	\bar{e}_i=\sum_{|X|=i} \tilde{l}(X)=(-1)^r\det(\matr{W}_{<r})\sum_{|X|=i}\det\begin{bmatrix}
  	f_{2r-1}  \bD^{(2r-1)}(X) &  \matr{V}_{< r}(X) \\
  	\matr{V}_{< r}(X)^{\top} & 0 
  	\end{bmatrix}
  	\end{align}
  	Now, injecting into Eq.~\eqref{eq:marginal-distr-size_multivar}, one shows that $\Proba(|\X| = m)$ may be written as:
  	\begin{align*}
  	\forall m,\qquad \Proba(|\X| = m) &= \frac{\varepsilon^{\eta(m)}\left(f_0(m) + \O(\varepsilon)  \right) }{\sum_{i=0}^n \varepsilon^{\eta(i)}\left(f_0(i) + \O(\varepsilon)  \right) }
  	\end{align*}
  	where $\eta(\cdot)$ and $f_0(\cdot)$ are two $\varepsilon$-independent functions verifying:
  	\begin{equation}
  	\label{eq:d_of_i_vsize_proof_multivar}
  	\eta(i) = \left\{ \begin{array}{ll} 
  	\eta_0(i)=0 & \mbox{if }  i=0 \\ 
  	\eta_1(i)=i(2-p)-2 & \mbox{if }  0<i\leq \PP_{1,d} \\ 
  	\eta_2(i)=i(4-p)-2d-4 & \mbox{if }  \PP_{1,d}\leq i\leq \PP_{2,d} \\ 
  	\vdots\\
  	\eta_l(i)=i(2l-p)-2 {d+l \choose d+1} & \mbox{if }  \PP_{l-1,d}\leq i\leq \PP_{l,d} \\ 
  	\vdots\\
  	\eta_{r-1}(i)=i(2r-2-p)-2 {d+r-1 \choose d+1} & \mbox{if }  \PP_{r-2,d}\leq i\leq \PP_{r-1,d} \\ 
  	\eta_{r}(i)=i(2r-1-p)- \left(2+\frac{d+1}{r-1}\right){d+r-1 \choose d+1} & \mbox{if }  i\geq \PP_{r-1,d} 
  	\end{array} \right. 
  	\end{equation}
  	and
  	\begin{align*}
  	f_0(i) = \left\{ \begin{array}{ll} 
  	1 & \mbox{if }  i=0 \\ 
  	\alpha^i \tilde{e}_i & \mbox{if }  0<i\leq \PP_{r-1,d} \\ 
  	\alpha^i \bar{e}_i & \mbox{if } i> \PP_{r-1,d}.
  	\end{array} \right. 
  	\end{align*}
  	We now make use of lemma~\ref{lem:TV-convergence-diverging}. In order to apply it, one needs to find the integers between $0$ and $n$ for which $\eta(\cdot)$ is minimal:
  	$$\argmin_{i\in\mathbb{N}, i\in[0,n]} \eta(i).$$ 
  	The answer to this question depends on $p,
  	r$ and $n$ which explains the different cases of the theorem. 	
  	Let us make first
  	a few simple observations on the function $\eta:\mathbb{R}^+\rightarrow \mathbb{R}$ :
  	\begin{itemize}
  		\item $\eta(\cdot)$ is continuous (everywhere except in $i=0$) and piecewise linear.
  		\item the slope of each of the linear pieces of $\eta(\cdot)$ is strictly increasing, starting at $2-p$ for the first piece $0<i\leq \PP_{1,d}$ and finishing at $2r-1-p$ for the last piece $i\geq \PP_{r-1,d}$.
  	\end{itemize}
  	We shall now explore all the possible cases sequentially. 
  	\begin{enumerate}
  		\item if $r<\frac{p+1}{2}$, \textit{i.e.}, $2r-1-p<0$: the slope of all the pieces of $\eta(\cdot)$ are negative, and $\eta(\cdot)$ is thus strictly decreasing. In this case, the integer in $[0,n]$ minimizing $\eta$ is $i=n$. Applying lemma~\ref{lem:TV-convergence-diverging}, as $\flatlim$, $|\Xe|=n$ with probability $1$.
  		\item if $r>\frac{p+1}{2}$:
  		\begin{enumerate}
  			\item if $p$ is odd, then $\frac{p-1}{2}$ is an integer and $\PP_{\frac{p-1}{2},d}$ is well defined. Trivially, $r>\frac{p+1}{2}$ implies $\PP_{\frac{p-1}{2},d}<\PP_{r-1,d}$. Also, note that $\eta(\cdot)$ decreases strictly between $0^+$ and $\PP_{\frac{p-1}{2},d}$, and then increases strictly after $\PP_{\frac{p-1}{2},d}$. The integer in the interval $[0,n]$ minimizing $\eta(\cdot)$ is thus $\min\left(\PP_{\frac{p-1}{2},d},n\right)$. Applying lemma~\ref{lem:TV-convergence-diverging}, as $\flatlim$, $|\Xe|=\min\left(\PP_{\frac{p-1}{2},d},n\right)$ with probability $1$.
  			\item if $p$ is even (the case $p=0$ falls into this category, recall that $\PP_{-1,d}$ is by convention set to $0$), then $r > \frac{p+1}{2}$ implies $\frac{p}{2}\leq r-1$ and thus  $\PP_{\frac{p}{2},d}\leq\PP_{r-1,d}$. Also, note that $\eta(\cdot)$ decreases strictly between $0^+$ and $\PP_{\frac{p}{2}-1,d}$, is constant between $\PP_{\frac{p}{2}-1,d}$ and $\PP_{\frac{p}{2},d}$, and then increases strictly after $\PP_{\frac{p}{2},d}$. The integers in the interval $[0,n]$ minimizing $\eta(\cdot)$ are thus: 
  			\begin{itemize}
  				\item $\{n\}$ if $n\leq  \PP_{\frac{p}{2}-1,d}$. In this case, applying lemma~\ref{lem:TV-convergence-diverging}, as $\flatlim$, $|\Xe|=n$ with probability $1$.
  				\item all the integers contained in the interval $\left[\PP_{\frac{p}{2}-1,d},\min\left(\PP_{\frac{p}{2},d},n\right)\right]$ if $n\geq  \PP_{\frac{p}{2}-1,d}$. In the following $I_{p,d}$ is the list of these integers. 
  				Applying lemma~\ref{lem:TV-convergence-diverging}, as $\flatlim$:
  				\begin{align}
  				\forall m\in I_{p,d}\quad	\Proba(|\Xs|=m)=\frac{\alpha^m\tilde{e}_m}{\sum_{i\in I_{p,d}} \alpha^i\tilde{e}_i}
  				\end{align}
  				Now, using the same arguments as in the proof of theorem~\ref{thm:general-case-smooth-fixed-size}, note that $\tilde{e}_i$ defined in  Eq.~\eqref{eq:etilde_multivar} may be re-written as:
  				\begin{align}
  				\label{eq:sum_over_|X|}
  				\forall i\in I_{p,d}\qquad \tilde{e}_i=
  				\det(\matr{W}_{< k})\sum_{|\X|=i} \det
  				\begin{pmatrix}
  				\bV_k(\X) \bar{\bW} \bV_k(\X)^\top & \bV_{< k}(\X) \\
  				\bV_{< k}(\X)^\top  & \matr{0}
  				\end{pmatrix}
  				\end{align}
  				where $\bar{\bW}$ is as in theorem~\ref{thm:general-case-smooth-fixed-size}. 
  				Now, consider the NNP $(\bV_k \bar{\bW} \bV_k^\top; \matr{V}_{< k})$ as well as $\widetilde{\bV_k \bar{\bW} \bV_k^\top}$ as defined in Definition~\ref{def:ext_Lens}. Note that the rank of $\widetilde{\bV_k \bar{\bW} \bV_k^\top}$ is $\min\left(\HH_{k,d}, n-\PP_{k-1,d}\right)$. One recognizes in the sum over $|\X|=i$ in Eq.~\eqref{eq:sum_over_|X|} the normalization constant of the fixed-size extended L-ensemble associated to this NNP and obtains, for all integer $i\in I_{p,d}$:
  				\begin{align}
  				\tilde{e}_i=
  				\det(\matr{W}_{< k}) (-1)^{\PP_{k-1,d}} e_{i-{\PP_{k-1,d}}}\left(\widetilde{\bV_k \bar{\bW} \bV_k^\top}\right) \det \left(\left(\matr{V}_{< k}\right)^\top \matr{V}_{< k}\right).
  				\end{align}
  				Simplifying, one obtains:
  				\begin{align}
  				\label{eq:temp_result}
  				\forall m\in I_{p,d}\quad \Proba(|\Xs|=m)=\frac{e_{m-{\PP_{k-1,d}}}\left(\alpha\widetilde{\bV_k \bar{\bW} \bV_k^\top}\right)}{\sum_{i\in I_{p,d}}e_{i-{\PP_{k-1,d}}}\left(\alpha\widetilde{\bV_k \bar{\bW} \bV_k^\top}\right)}.
  				\end{align}
  				Changing the summing index gives:
  				\begin{align}
  				\sum_{i\in I_{p,d}}e_{i-{\PP_{k-1,d}}}\left(\alpha\widetilde{\bV_k \bar{\bW} \bV_k^\top}\right)=
  				\sum_{i=0}^{\min(\HH_{k,d},n-\PP_{k-1,d})} e_{i}\left(\alpha\widetilde{\bV_k \bar{\bW} \bV_k^\top}\right).
  				\end{align}
  				Finally, note that, as $\text{rank}\left(\widetilde{\bV_k \bar{\bW} \bV_k^\top}\right) = \min\left(\HH_{k,d}, n-\PP_{k-1,d}\right)$, all the elementary symmetric polynomials $e_i$ for $i>\min\left(\HH_{k,d}, n-\PP_{k-1,d}\right)$ are null. The denominator of Eq.~\eqref{eq:temp_result} is thus $\sum_{i=0}^n e_{i}\left(\alpha\widetilde{\bV_k \bar{\bW} \bV_k^\top}\right)=\det\left(\matr{I}+\alpha\widetilde{\bV_k \bar{\bW} \bV_k^\top}\right)$ and one obtains:
  				\begin{align}
  				\forall m\in I_{p,d}\quad \Proba(|\Xs|=m)=\frac{e_{m-{\PP_{k-1,d}}}\left(\alpha\widetilde{\bV_k \bar{\bW} \bV_k^\top}\right)}{\det\left(\matr{I}+\alpha\widetilde{\bV_k \bar{\bW} \bV_k^\top}\right)}.
  				\end{align}
  			\end{itemize}
  		\end{enumerate}
  		\item if $r=\frac{p+1}{2}$, $\eta(\cdot)$ decreases strictly between $0^+$ and $\PP_{r-1,d}$, and is constant after $\PP_{r-1,d}$. The integers in the interval $[0,n]$ minimizing $\eta(\cdot)$ are thus: 
  		\begin{itemize}
  			\item $\{n\}$ if $n\leq  \PP_{r-1,d}$. In this case, applying lemma~\ref{lem:TV-convergence-diverging}, as $\flatlim$, $|\Xe|=n$ with probability $1$.
  			\item all those contained in the interval $\left[\PP_{r-1,d},n\right]$ if $n\geq  \PP_{r-1,d}$. In the following $I_{r,d}$ is the list of these integers. 
  			Applying lemma~\ref{lem:TV-convergence-diverging}, as $\flatlim$:
  			\begin{align}
  			\label{eq:I_rd_P}
  			\forall m\in I_{r,d}\quad	\Proba(|\Xs|=m)=\frac{\alpha^m\bar{e}_m}{\sum_{i\in I_{r,d}} \alpha^i\bar{e}_i}.
  			\end{align}
  			Now, consider the NNP $(f_{2r-1}\bD^{(2r-1)}; \matr{V}_{< r})$ as well as $f_{2r-1}\widetilde{\bD^{(2r-1)}}$ as defined in Definition~\ref{def:ext_Lens}. Note that the rank of $f_{2r-1}\widetilde{\bD^{(2r-1)}}$ is $n-\PP_{r-1,d}$. When looking at the definition of $\bar{e}_i$ in Eq.~\eqref{eq:ebar}, one recognizes in the sum over $|\X|=i$  the normalization constant of the fixed-size extended L-ensemble associated to this NNP and obtains, for all integer $i\in I_{r,d}$:
  			\begin{align*}
  			\sum_{|X|=i}\det\begin{bmatrix}
  			f_{2r-1}  \bD^{(2r-1)}(X) &  \matr{V}_{< r}(X) \\
  			\matr{V}_{< r}^{\T}(X) & 0 
  			\end{bmatrix} = (-1)^r e_{i-r}\left(f_{2r-1}\widetilde{\bD^{(2r-1)}}\right) \det \left(\left(\matr{V}_{< r}\right)^\top \matr{V}_{< r}\right)
  			\end{align*}
  			Injecting this in Eq.~\eqref{eq:ebar} and the resulting $\bar{e}_i$ in Eq.\eqref{eq:I_rd_P}, one obtains after simplifying in a similar fashion as in case 2:
  			\begin{align}
  			\Proba(|\Xs| = m) =  \left\{ \begin{array}{ll} 
  			0 & \mbox{if }  m< \PP_{r-1,d}\\ 
  			\frac{e_{m-\PP_{r-1,d}}\left(\alpha f_{2r-1}\widetilde{\bD^{(2r-1)}}\right)}{\det\left(\bI+\alpha f_{2r-1}\widetilde{\bD^{(2r-1)}}\right)} & \mbox{if } m\geq \PP_{r-1,d}\end{array} \right. 
  			\end{align}
  		\end{itemize}
  	\end{enumerate}
  	Finally, one may see that the three cases just described can in fact be equivalently stated in the form of the lemma, finishing the proof.
  \end{proof}
  \subsection{Proof of the theorem}
  \begin{proof}[Proof of theorem~\ref{thm:Xe_varying_multivariate}] We first prove the theorem for the univariate case. 
  	
  	\noindent\textbf{\emph{The univariate case ($d=1$)}}. 	
  	We will prove each case sequentially. First of all, for all the cases in Lemma ~\ref{lem:size_Xe_varying_multivariate} for which $|\Xe|=n$ in the limit $\flatlim$, the set $\Xe$ obviously tends to $\Omega$. Let us now focus on scenario number 2. 
  	
  	In the case 2a, we know from Lemma ~\ref{lem:size_Xe_varying_multivariate} that $|\Xs|=\frac{p+1}{2}$ with probability one. The limiting process is thus a fixed-size L-ensemble of size $l=\frac{p+1}{2}$. The fixed-size limit applies and theorem~\ref{thm:general-case-smooth-fixed-size} implies the result.
  	
  	Case 2b needs a bit more work.  First of all, define the integer $l=\frac{p}{2}$ and consider an orthonormal basis $\bQ\in\mathbb{R}^{n\times l}$ of $\text{span}(\bV_{\leq l-1})$. Also, consider the vector $\vect{q}_{l}$ such that $\bQ'=\left[\bQ | \vect{q}_{l}\right]\in\mathbb{R}^{n\times (l+1)}$ is an orthonormal basis for ${\rm span}(\bV_{\leq l})$. 
  	From Lemma~\ref{lem:size_Xe_varying_multivariate}, as $\PP_{l-1,d=1}=l$ and $\PP_{l,d=1}=l+1$, the limiting distribution of $|\Xs|$ has only two possible values: $l$ and $l+1$. Noting that in dimension one $\bar{\bW}$ is simply a scalar and $\bV_l$ a one-dimensional vector, a short calculation yields
  	\[ |\Xs| =
  	\begin{cases}
  	l \text{ with probability } \frac{1}{1+\alpha\gamma} \\
  	l + 1  \text{ with probability } \frac{\alpha\gamma}{1+\alpha\gamma}
  	\end{cases}
  	\]
  	with \begin{align}
  	\label{eq:gamma_orig}
  	\gamma = \frac{\det (\bV_{\leq l}^\top \bV_{\leq
  			l})\det \bW_{\leq l}}{\det (\bV_{\leq l-1}^\top \bV_{\leq
  			l-1})\det \bW_{\leq l-1}}
  	\end{align}
  	Now, using Theorem ~\ref{thm:general-case-smooth-fixed-size} (we are in case 1), one obtains that $\Xs$ is a mixture of two fixed-size L-ensembles: with probability
  	$\frac{1}{1+\alpha \gamma}$, it has size $l$ and distribution $\mDPP{l}(\bQ\bQ^\top)$, and with probability $\frac{\alpha\gamma}{1+\alpha\gamma}$,
  	it has size $l+1$ and distribution $\mDPP{l+1}(\bQ'\bQ'^\top)$. Looking at the mixture representation of pp-DPPs described in Corollary~3.3 of~\cite{paper1}
  	, one observes that this limiting distribution can be succinctly described as a pp-DPP $\Xs \sim DPP \ELE{\alpha\gamma\bQ'\bQ'^\top}{\bQ}$. 
  	Now, by the invariance property of remark~3.11 of~\cite{paper1}
  	, this is equivalent to $\Xs  \sim \ppDPP \ELE{\alpha\gamma\vect{q}_{l}\vect{q}_{l}^\top}{\bQ}$. Also, by the invariance
  	property of remark~3.10 of~\cite{paper1}
  	, this is in turn equivalent to $\Xs  \sim \ppDPP \ELE{\alpha\gamma\vect{q}_{l}\vect{q}_{l}^\top}{\bV_{< l}}$.
  	Finally, noting that 
  	\begin{align*}
  	\frac{\det (\bV_{\leq l}^\top \bV_{\leq l})}{\det (\bV_{\leq l-1}^\top \bV_{\leq l-1})}	\vect{q}_{l}\vect{q}_{l}^\top= 	\bV_{l}\bV_{l}^\top
  	\end{align*} 
  	and injecting in the expression of $\gamma$ of Eq.~\ref{eq:gamma_orig}, one obtains that
  	$
  	\gamma
  	\vect{q}_{l}\vect{q}_{l}^\top  
  	=\frac{\det \bW_{\leq l}}{\det \bW_{\leq l-1}}\bV_{l} \bV_{l}^\top
  	=\bV_{l} \bar{\bW} \bV_{l}^\top
  	$, 
  	finishing the proof that the limit in case 2b is $\Xs \sim \ppDPP \ELE{\alpha \bV_l\bar{\bW}\bV_l^\top}{\bV_{< l}}$.
  	
  	Let us finish with case 3. From a mixture point of view, the limiting process can be described by:
  	\begin{enumerate}
  		\item draw the size $m$ of the set according to case 3 of Lemma~\ref{lem:size_Xe_varying_multivariate}:
  		\begin{align}
  		\Proba(|\Xs| = m) =  \left\{ \begin{array}{ll} 
  		0 & \mbox{if }  m< r \text{ or } m>n\\ 
  		\frac{e_{m-r}\left(\alpha f_{2r-1}\widetilde{\bD^{(2r-1)}}\right)}{\det\left(\bI+\alpha f_{2r-1}\widetilde{\bD^{(2r-1)}}\right)} & \mbox{otherwise}
  		\end{array} \right.
  		\end{align}
  		where $\widetilde{\bD^{(2r-1)}}=(\bI-\bQ\bQ^\top) \bD^{(2r-1)}(\bI-\bQ\bQ^\top)$, $\bQ$ being an orthonormal basis of ${\rm{span}}(\matr{V}_{< r})$.
  		\item conditionally on the size, draw a fixed-size pp-DPP, which, according to theorem~\ref{thm:general-case-smooth-fixed-size} (we are in case 3), reads
  		$\Xs \sim \mppDPP{m} \ELE{(-1)^r\bD^{(2r-1)}} {\bV_{< r}}$. 
  	\end{enumerate}
  	Noting that  $\Xs \sim \mppDPP{m} \ELE{(-1)^r\bD^{(2r-1)}} {\bV_{< r}}$ is equivalent to $\Xs \sim \mppDPP{m} \ELE{\alpha f_{2r-1}\bD^{(2r-1)}} {\bV_{< r}}$, this mixture is precisely the mixture representation (see Corollary~3.3 of~\cite{paper1}) of $\Xs \sim \ppDPP \ELE{\alpha f_{2r-1}\bD^{(2r-1)}} {\bV_{< r}}$, ending the proof.\\
  	
  	\noindent\textbf{\emph{The multivariate case ($d>1$)}}. The multivariate proof is omitted, as it behaves exactly like the univariate one: in all three cases, use Lemma~\ref{lem:size_Xe_varying_multivariate} and theorem~\ref{thm:general-case-smooth-fixed-size} to describe the limiting process as a mixture, before observing that these mixtures are precisely the mixture representations of pp-DPPs.
  	
  	
  	
  	
  \end{proof}
\end{appendix}


\begin{thebibliography}{16}

\bibitem{bardenet2020monte}
\begin{barticle}[author]
\bauthor{\bsnm{Bardenet},~\bfnm{R{\'e}mi}\binits{R.}} \AND
  \bauthor{\bsnm{Hardy},~\bfnm{Adrien}\binits{A.}}
(\byear{2020}).
\btitle{Monte Carlo with determinantal point processes}.
\bjournal{The Annals of Applied Probability}
\bvolume{30}
\bpages{368--417}.
\end{barticle}
\endbibitem

\bibitem{Barthelme:AsEqFixedSizeDPP}
\begin{barticle}[author]
\bauthor{\bsnm{Barthelm{\'e}},~\bfnm{Simon}\binits{S.}},
  \bauthor{\bsnm{Amblard},~\bfnm{Pierre-Olivier}\binits{P.-O.}} \AND
  \bauthor{\bsnm{Tremblay},~\bfnm{Nicolas}\binits{N.}}
(\byear{2019}).
\btitle{Asymptotic Equivalence of Fixed-size and Varying-size Determinantal
  Point Processes}.
\bjournal{Bernoulli}.
\end{barticle}
\endbibitem

\bibitem{BarthelmeUsevich:KernelsFlatLimit}
\begin{barticle}[author]
\bauthor{\bsnm{Barthelm{\'e}},~\bfnm{Simon}\binits{S.}} \AND
  \bauthor{\bsnm{Usevich},~\bfnm{Konstantin}\binits{K.}}
(\byear{2021}).
\btitle{Spectral properties of kernel matrices in the flat limit}.
\bjournal{SIAM Journal on Matrix Analysis and Applications (arXiv:1910.14067)}
\bvolume{42}
\bpages{17--57}.
\end{barticle}
\endbibitem

\bibitem{suppl}
  \begin{barticle}[author]
    \bauthor{\bsnm{Barthelm{\'e}},~\bfnm{Simon}\binits{S.}},
    \bauthor{\bsnm{Tremblay},~\bfnm{Nicolas}\binits{N.}},
    \bauthor{\bsnm{Usevich},~\bfnm{Konstantin}\binits{K.}} \AND
    \bauthor{\bsnm{Amblard},~\bfnm{Pierre-Olivier}\binits{P.-O.}}
    (\byear{2021}).
    \btitle{Supplement to Determinantal Point Processes in the Flat Limit}.
    \bjournal{Bernoulli}.
  \end{barticle}
  \endbibitem


\bibitem{driscoll2002interpolation}
\begin{barticle}[author]
\bauthor{\bsnm{Driscoll},~\bfnm{Tobin~A}\binits{T.~A.}} \AND
  \bauthor{\bsnm{Fornberg},~\bfnm{Bengt}\binits{B.}}
(\byear{2002}).
\btitle{Interpolation in the limit of increasingly flat radial basis
  functions}.
\bjournal{Computers \& Mathematics with Applications}
\bvolume{43}
\bpages{413--422}.
\end{barticle}
\endbibitem

\bibitem{fanuel2020determinantal}
\begin{barticle}[author]
\bauthor{\bsnm{Fanuel},~\bfnm{Micha{\"e}l}\binits{M.}},
  \bauthor{\bsnm{Schreurs},~\bfnm{Joachim}\binits{J.}} \AND
  \bauthor{\bsnm{Suykens},~\bfnm{Johan~AK}\binits{J.~A.}}
(\byear{2020}).
\btitle{Determinantal Point Processes Implicitly Regularize Semi-parametric
  Regression Problems}.
\bjournal{arXiv preprint arXiv:2011.06964}.
\end{barticle}
\endbibitem

\bibitem{gasca2000polynomial}
\begin{barticle}[author]
\bauthor{\bsnm{Gasca},~\bfnm{Mariano}\binits{M.}} \AND
  \bauthor{\bsnm{Sauer},~\bfnm{Thomas}\binits{T.}}
(\byear{2000}).
\btitle{Polynomial interpolation in several variables}.
\bjournal{Advances in Computational Mathematics}
\bvolume{12}
\bpages{377}.
\end{barticle}
\endbibitem

\bibitem{Gau20}
\begin{bphdthesis}[author]
\bauthor{\bsnm{Gautier},~\bfnm{Guillaume}\binits{G.}}
(\byear{2020}).
\btitle{{On sampling determinantal point processes}},
\btype{Ph.D. thesis},
\bpublisher{Ecole Centrale de Lille}.
\end{bphdthesis}
\endbibitem

\bibitem{KuleszaTaskar:FixedSizeDPPs}
\begin{binproceedings}[author]
\bauthor{\bsnm{Kulesza},~\bfnm{Alex}\binits{A.}} \AND
  \bauthor{\bsnm{Taskar},~\bfnm{Ben}\binits{B.}}
(\byear{2011}).
\btitle{k-DPPs: Fixed-size determinantal point processes}.
In \bbooktitle{Proceedings of the 28th International Conference on Machine
  Learning (ICML-11)}
\bpages{1193--1200}.
\end{binproceedings}
\endbibitem

\bibitem{kulesza2012determinantal}
\begin{barticle}[author]
\bauthor{\bsnm{Kulesza},~\bfnm{Alex}\binits{A.}},
  \bauthor{\bsnm{Taskar},~\bfnm{Ben}\binits{B.}} \betal{et~al.}
(\byear{2012}).
\btitle{Determinantal point processes for machine learning}.
\bjournal{Foundations and Trends{\textregistered} in Machine Learning}
\bvolume{5}
\bpages{123--286}.
\end{barticle}
\endbibitem

\bibitem{lee2015flatkernel}
\begin{barticle}[author]
\bauthor{\bsnm{Lee},~\bfnm{Yeon~Ju}\binits{Y.~J.}},
  \bauthor{\bsnm{Micchelli},~\bfnm{Charles~A.}\binits{C.~A.}} \AND
  \bauthor{\bsnm{Yoon},~\bfnm{Jungho}\binits{J.}}
(\byear{2015}).
\btitle{A study on multivariate interpolation by increasingly flat kernel
  functions}.
\bjournal{Journal of Mathematical Analysis and Applications}
\bvolume{427}
\bpages{74--87}.
\end{barticle}
\endbibitem

\bibitem{Macchi:CoincidenceApproach}
\begin{barticle}[author]
\bauthor{\bsnm{Macchi},~\bfnm{Odile}\binits{O.}}
(\byear{1975}).
\btitle{The coincidence approach to stochastic point processes}.
\bjournal{Advances in Applied Probability}
\bvolume{7}
\bpages{83-122}.
\bdoi{10.2307/1425855}
\end{barticle}
\endbibitem

\bibitem{song2012multivariate}
\begin{barticle}[author]
\bauthor{\bsnm{Song},~\bfnm{Guohui}\binits{G.}},
  \bauthor{\bsnm{Riddle},~\bfnm{John}\binits{J.}},
  \bauthor{\bsnm{Fasshauer},~\bfnm{Gregory~E}\binits{G.~E.}} \AND
  \bauthor{\bsnm{Hickernell},~\bfnm{Fred~J}\binits{F.~J.}}
(\byear{2012}).
\btitle{Multivariate interpolation with increasingly flat radial basis
  functions of finite smoothness}.
\bjournal{Advances in Computational Mathematics}
\bvolume{36}
\bpages{485--501}.
\end{barticle}
\endbibitem

\bibitem{stein1999interpolation}
\begin{bbook}[author]
\bauthor{\bsnm{Stein},~\bfnm{Michael~L}\binits{M.~L.}}
(\byear{1999}).
\btitle{Interpolation of Spatial Data: Some Theory for Kriging}.
\bpublisher{Springer}.
\end{bbook}
\endbibitem

\bibitem{tremblay2019determinantal}
\begin{barticle}[author]
\bauthor{\bsnm{Tremblay},~\bfnm{Nicolas}\binits{N.}},
  \bauthor{\bsnm{Barthelm{\'e}},~\bfnm{Simon}\binits{S.}} \AND
  \bauthor{\bsnm{Amblard},~\bfnm{Pierre-Olivier}\binits{P.-O.}}
(\byear{2019}).
\btitle{Determinantal Point Processes for Coresets}.
\bjournal{Journal of Machine Learning Research}
\bvolume{20}
\bpages{1--70}.
\end{barticle}
\endbibitem

\bibitem{paper1}
\begin{barticle}[author]
\bauthor{\bsnm{Tremblay},~\bfnm{Nicolas}\binits{N.}},
  \bauthor{\bsnm{Barthelm{\'e}},~\bfnm{Simon}\binits{S.}},
  \bauthor{\bsnm{Usevich},~\bfnm{Konstantin}\binits{K.}} \AND
  \bauthor{\bsnm{Amblard},~\bfnm{Pierre-Olivier}\binits{P.-O.}}
(\byear{2022}).
\btitle{Extended L-ensembles: a new representation for Determinantal Point
  Processes}.
\bjournal{Accepted to Annals of Applied Probability}.
\end{barticle}
\endbibitem

\bibitem{wendland2004scattered}
\begin{bbook}[author]
\bauthor{\bsnm{Wendland},~\bfnm{Holger}\binits{H.}}
(\byear{2004}).
\btitle{Scattered data approximation}
\bvolume{17}.
\bpublisher{Cambridge university press}.
\end{bbook}
\endbibitem

\end{thebibliography}

\end{document}